\documentclass[12pt]{amsart}
\setlength{\oddsidemargin}{0pt}
\setlength{\evensidemargin}{0pt}
\setlength{\textwidth}{6.3in}
\setlength{\textheight}{9in}
\setlength{\hoffset}{0pt}
\setlength{\voffset}{0pt}
\setlength{\topmargin}{0pt}
\usepackage[all]{xy}
\usepackage{amsmath,amsthm,amsfonts,amssymb,latexsym,graphics}

\newtheorem{theorem}{Theorem}
\newtheorem{lemma}[theorem]{Lemma}
\newtheorem{proposition}[theorem]{Proposition}

\newtheorem{remark}[theorem]{Remark}
\theoremstyle{definition}

\newcommand{\obdry}{\partial \Omega}

\newcommand{\bte}{B(x_{ij},2\epsilon)}
\newcommand{\boe}{B(x_{ij},\epsilon)}
\newcommand{\eit}{e^{\frac{2\varphi_i}{h}}}
\newcommand{\annu}{\mathcal{A}_i}

\newcommand{\real}{\text{Re\,}}

\newcommand{\Cbb}{\mathbb{C}}
\newcommand{\Rbb}{\mathbb{R}}
\newcommand{\leftexp}[2]{{\vphantom{#2}}^{#1}{#2}}

\numberwithin{theorem}{section}

\SelectTips{cm}{}

\begin{document}

\title[Exponential Lower Bounds]{Exponential Lower Bounds for Quasimodes of Semiclassical Schr\"{o}dinger
Operators}
\author{Michael VanValkenburgh}
\address{Department of Mathematics \\ UCLA \\ Los Angeles, CA 90095-1555, USA}
\email{mvanvalk@ucla.edu}

\begin{abstract}
We prove quantitative unique continuation results for the
semiclassical Schr-\"{o}dinger operator on smooth, compact
domains. These take the form of exponentially decreasing (in $h$)
local $L^{2}$ lower bounds for exponentially precise quasimodes.
We also show that these lower bounds are sharp in $h$, and that,
moreover, the hypothesized quasimode accuracy is also sharp.
\end{abstract}

\maketitle

\section{Introduction}

In this paper we establish quantitative unique continuation
results for the semiclassical Schr\"{o}dinger operator on smooth,
compact domains. We consider a smooth, open, bounded, and
connected domain $\Omega\subset\Rbb^{n}$, and we let $G=(g^{ij})\in
C^{\infty}(\overline{\Omega})^{n^{2}}$ be a positive definite
symmetric matrix with real entries. Then, with $\Delta$ denoting
the ``Laplacian'' associated to this matrix,
\begin{equation*}
    \Delta=\sum_{i,j}\partial_{x^{i}}\,g^{ij}(x)\,\,\partial_{x^{j}},
\end{equation*}
and with $V\in C^{\infty}(\overline{\Omega},\Rbb)$ as our
potential, we take as our Schr\"{o}dinger operator
\begin{equation*}
    P(h):= -h^{2}\Delta +V.
\end{equation*}
For simplicity, we will only consider the Dirichlet realization of
$P$; that is, we will only allow $P$ to act on the domain
\begin{equation*}
    \mathcal{D}(P):=H^{2}(\Omega)\cap H^{1}_{0}(\Omega)
\end{equation*}
corresponding to ``zero boundary conditions''. Our unique
continuation results will take the form of local $L^{2}$ lower
bounds for certain quasimodes of this operator. For a uniformly
bounded spectral parameter
\begin{equation*}
    E(h)\in [a,b],\quad\text{for some  }-\infty< \min V\leq a \leq b<\infty,
\end{equation*}
and for some $\beta>0$ and $h_{0}>0$, we consider
$(\beta,h_{0})$-exponentially precise quasimodes of $P(h)$:
\begin{equation*}
    u(\cdot\,;h)\in \mathcal{D}(P)\quad\text{such that}\quad
    \begin{cases}
    ||u||_{L^{2}(\Omega)}=1, &\text{ and}\\
    ||(P(h)-E(h))u||_{L^{2}(\Omega)}=\mathcal{O}(e^{-\frac{\beta}{h}})
    \end{cases}
\end{equation*}
for all $h\in(0,h_{0})$. Throughout this paper we allow
$\beta=\infty$, which corresponds to exact eigenfunctions.

\vspace{10pt}

The following theorems are our main results:

\vspace{10pt}

\begin{theorem}\label{T:One}
    Let $\omega$ be an open subset of $\Omega$.
    Then there exist constants $C>0$, $\alpha>0$, $h_0 >0$, and $\beta_{0}>0$ such that
    \begin{equation*}
        C e^{-\frac{\alpha}{h}} \leq ||u(\cdot\,;h)||_{L^2(\omega)}
    \end{equation*}
    for all $(\beta,h_{0})$-exponentially
    precise quasimodes $u$ with $\beta>\beta_{0}$.
\end{theorem}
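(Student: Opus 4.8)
The plan is to derive the lower bound from a quantitative unique continuation principle for $P(h)$, obtained from semiclassical Carleman estimates: one first shows that smallness of $u$ on a fixed small ball propagates, with a controlled exponential loss, throughout $\Omega$, and then uses the normalization $\|u\|_{L^{2}(\Omega)}=1$ to convert this into the asserted lower bound on $\|u\|_{L^{2}(\omega)}$. Since the conclusion only strengthens when $\omega$ is replaced by a subset, we fix a ball $B_{0}=B(x_{0},\rho)$ with $\overline{B(x_{0},2\rho)}\subset\Omega$ and prove the bound with $\omega$ replaced by $B_{0}$. Everything then reduces to establishing an inequality of the shape
\[
    1 \;\le\; C\,e^{A/h}\bigl(\,\|u\|_{L^{2}(B_{0})}^{\theta}+e^{-\beta/h}\,\bigr),\qquad 0<\theta<1,
\]
in which $C,A,\theta$ depend only on $\Omega,G,V,B_{0}$ and the spectral window $[a,b]$, and in particular are \emph{uniform} in $E(h)\in[a,b]$. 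Granting this, take $\beta_{0}=A$: for $\beta>\beta_{0}$ and $h$ small, $Ce^{A/h}e^{-\beta/h}\le\tfrac12$, so $\|u\|_{L^{2}(B_{0})}\ge c\,e^{-\alpha/h}$ with $\alpha=A/\theta$.

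The main tool is a Carleman estimate for $P(h)$. Given $y\in\overline{\Omega}$, one chooses a weight $\varphi=e^{\lambda\psi}$, with $\psi$ smooth and $\nabla\psi\neq0$ near $y$ and $\lambda$ large, that is strongly pseudoconvex with respect to the semiclassical symbol $\sum_{i,j}g^{ij}(x)\xi_{i}\xi_{j}+V(x)-E$ of $P(h)-E$; unlike in the classical theory the potential \emph{enters} this requirement, but only through a lower bound on $\lambda$ depending on $G$, $\sup_{\overline{\Omega}}|V|$, and $[a,b]$, so the construction is uniform in $E\in[a,b]$. For such $\varphi$, $h$ small, and $v\in\mathcal{D}(P)$ supported near $y$ one has a Carleman estimate
\[
    h\,\|e^{\varphi/h}v\|_{L^{2}}^{2}+h^{3}\,\|e^{\varphi/h}\nabla v\|_{L^{2}}^{2}\;\le\;C\,\|e^{\varphi/h}\,P(h)v\|_{L^{2}}^{2},
\]
the precise powers of $h$ being immaterial; when $y\in\partial\Omega$ the same holds with an added boundary term on the right that is made $\le0$ by choosing $\partial_{\nu}\varphi$ with the appropriate sign, since the Dirichlet condition forces $v$ and all its tangential derivatives to vanish on $\partial\Omega$. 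For $v=u$ one has $P(h)u=E(h)u+\mathcal{O}_{L^{2}}(e^{-\beta/h})$, so, $\varphi$ being bounded, the right-hand side is $C\|e^{\varphi/h}u\|_{L^{2}}^{2}+\mathcal{O}(e^{-2(\beta-M)/h})$ with $M=\sup_{\operatorname{supp}v}|\varphi|$; this is the mechanism by which $\beta_{0}$ enters.

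A standard cutoff-and-optimization argument converts this into a three-ball inequality: for concentric balls $B'\Subset B''\Subset B'''\Subset\Omega$,
\[
    \|u\|_{L^{2}(B'')}\;\le\;C\,e^{A/h}\,\|u\|_{L^{2}(B')}^{\theta}\,\|u\|_{L^{2}(B''')}^{1-\theta}\;+\;C\,e^{A/h}\,e^{-\beta/h},
\]
with $\theta\in(0,1)$ and $C,A,\theta$ locally uniform. Fix $\delta>0$ small enough that $\Omega_{\delta}:=\{x\in\Omega:\operatorname{dist}(x,\partial\Omega)>\delta\}$ is connected and contains $B_{0}$. Since $\Omega$ is connected and $\overline{\Omega_{\delta}}$ is compact, finitely many such triples of balls lying in $\Omega$ cover $\overline{\Omega_{\delta}}$ and can be linked into a chain beginning at $B_{0}$; iterating the three-ball inequality along this chain and bounding each ``outer'' factor by $\|u\|_{L^{2}(\Omega)}=1$ yields
\[
    \|u\|_{L^{2}(\Omega_{\delta})}\;\le\;C\,e^{A_{1}/h}\bigl(\,\|u\|_{L^{2}(B_{0})}^{\theta_{1}}+e^{-\beta/h}\,\bigr).
\]
Finally, the boundary Carleman estimate, used in a \emph{fixed} collar neighborhood of $\partial\Omega$ with a weight decreasing toward $\partial\Omega$, propagates this from the interior layer $\Omega_{\delta}$ out to the boundary and gives $\|u\|_{L^{2}(\Omega\setminus\Omega_{\delta})}\le C\,e^{A_{2}/h}\bigl(\|u\|_{L^{2}(\Omega_{\delta})}+e^{-\beta/h}\bigr)$. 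Adding the last two estimates and using $\|u\|_{L^{2}(\Omega_{\delta})}+\|u\|_{L^{2}(\Omega\setminus\Omega_{\delta})}\ge\|u\|_{L^{2}(\Omega)}=1$ produces the required inequality, and the theorem follows.

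The interior Carleman estimate and the chaining are routine; the real obstacle is the behavior at $\partial\Omega$. One must (i) arrange the boundary Carleman estimate so that the Dirichlet condition genuinely kills the boundary term, which fixes the sign of $\partial_{\nu}\varphi$ and thereby constrains the admissible weights near $\partial\Omega$; (ii) carry this out in a collar of \emph{fixed} width, so that its cost is only $e^{\mathcal{O}(1/h)}$ — a naive chain of shrinking balls descending to depth $\sim h$ would generate an uncontrollable loss $e^{\mathcal{O}(1/h^{2})}$ together with a vanishing exponent; and (iii) check that all the accumulated constants $A,\theta$, and hence $\beta_{0}$ and $\alpha$, are uniform over the compact window $[a,b]$ — which is exactly what allows a single $\beta_{0}$ to serve for every quasimode in the statement.
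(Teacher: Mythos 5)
Your route---local Carleman estimates yielding three-ball inequalities, chained through a covering of $\overline{\Omega_\delta}$, plus a boundary-collar Carleman estimate with a weight decreasing toward $\partial\Omega$---is a workable quantitative unique continuation argument and would prove the theorem, but it is genuinely different from, and heavier than, the paper's proof. The paper uses a single global weight: a nonnegative Morse function $\psi$ with $N\psi|_{\partial\Omega}<0$ has its finitely many critical points moved into $\omega_0\subset\subset\omega$ by a diffeomorphism fixing a neighborhood of $\partial\Omega$, and $\varphi=e^{\gamma\psi}$ (Proposition~\ref{P:CWgamma}) is then a Carleman weight on $\overline{\Omega}\setminus\omega_0$; a single application of Theorem~\ref{P:CarlemanEst2} to $e^{\varphi/h}\chi u$ suffices, because $N\varphi|_{\partial\Omega}<0$ together with the Dirichlet condition removes the boundary term, the commutator $[P,\chi]u$ is supported in $\omega_2\setminus\omega_1\subset\omega$ and is converted to $\|u\|_{L^2(\omega)}$ by the elliptic estimate of Lemma~\ref{Cor:Useful}, and comparing the maximum of $\varphi$ on $\overline{\omega_2}\setminus\omega_1$ with its minimum on $\overline{\Omega}\setminus\omega_1$ gives explicit $\alpha$ and $\beta_0$, with no chaining, covering, or interior/boundary splitting. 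Two points in your sketch need repair if you pursue your version. First, the Carleman inequality must be stated for $P(h)-E(h)$ and applied to cutoffs $\chi u$, never to $u$ itself: as literally written, with $P(h)v$ on the right and $v=u$, the term $C\|e^{\varphi/h}u\|_{L^2}^2$ swamps the $h\|e^{\varphi/h}u\|_{L^2}^2$ on the left and no localization information survives; the useful right-hand side is $\|e^{\varphi/h}[P,\chi]u\|_{L^2}^2+\mathcal{O}(e^{2(M-\beta)/h})$ (this is what your ``cutoff-and-optimization'' step implicitly uses), and the resulting $H^1_h$ commutator term then needs an interior elliptic estimate such as Lemma~\ref{Cor:Useful}. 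Second, iterating the three-ball inequality along the chain degrades the quasimode error to $e^{-\theta^N\beta/h}$, so your chained estimate should carry $e^{-\theta'\beta/h}$ for some $\theta'\in(0,1)$ rather than $e^{-\beta/h}$; this is harmless---it only enlarges $\beta_0$---but it must be tracked when you finally impose $\beta>\beta_0$. Your collar step, with the weight decreasing toward $\partial\Omega$ so that the Dirichlet data kill the boundary term, is exactly the boundary mechanism the paper exploits globally; what the single global weight buys is the elimination of all the uniformity and accumulation bookkeeping your chaining requires.
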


\vspace{10pt}

\begin{theorem}\label{T:Two}
Let $\Gamma\subset\obdry$ be a connected component of the boundary
of $\Omega$. Then there exist constants $C>0$, $\alpha>0$, $h_0
>0$, and $\beta_{0}>0$ such that
    \begin{equation*}
        C e^{-\frac{\alpha}{h}} \leq ||hNu(\cdot\,;h)||_{L^2(\Gamma)}
    \end{equation*}
    for all $(\beta,h_{0})$-exponentially
    precise quasimodes $u$ with $\beta>\beta_{0}$.
\end{theorem}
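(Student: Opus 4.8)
The plan is to deduce Theorem~\ref{T:Two} from Theorem~\ref{T:One} by a quantitative unique continuation argument from the boundary component $\Gamma$: since a quasimode $u$ satisfies $u|_{\Gamma}=0$ by the Dirichlet condition, its only Cauchy datum on $\Gamma$ is $hNu|_{\Gamma}$, and the point is that smallness of $\|hNu\|_{L^{2}(\Gamma)}$ forces smallness of $u$ in a fixed collar of $\Gamma$, contradicting the interior lower bound of Theorem~\ref{T:One}. First I would fix a one-sided collar neighbourhood of $\Gamma$ together with boundary normal coordinates $(x',x_{n})$ adapted to $G$, so that $\Gamma=\{x_{n}=0\}$, the collar is $\{0\le x_{n}<\delta\}$, and $hNu|_{\Gamma}$ corresponds in these coordinates to $h\partial_{x_{n}}u|_{\{x_{n}=0\}}$ up to a positive smooth factor. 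Next I would fix, once and for all, a nonempty open $\omega\subset\{d_{0}/2<x_{n}<d_{0}\}$ with $d_{0}<\delta/2$ (so $\overline{\omega}\subset\Omega$); applying Theorem~\ref{T:One} to this $\omega$ yields \emph{fixed} constants $C_{0},\alpha_{0}>0$ such that $\|u\|_{L^{2}(\omega)}\ge C_{0}e^{-\alpha_{0}/h}$ for all sufficiently precise quasimodes and all small $h$.

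The main ingredient is a Carleman estimate for $P(h)-E(h)$ up to $\Gamma$. I would take a weight $\varphi=e^{\lambda\psi(x_{n})}$, with $\psi$ affine, strictly decreasing, and positive on $[0,\delta]$, and $\lambda>0$ a large parameter fixed later; then $\varphi$ is strictly decreasing in $x_{n}$, $\varphi'(0)<0$, and, after this standard convexification, $\varphi$ satisfies H\"ormander's pseudoconvexity condition relative to the symbol $|\xi|_{g}^{2}+V-E$ throughout the collar. Conjugating by $\eh$, splitting the conjugated operator into its self- and skew-adjoint parts, and integrating by parts, the Dirichlet condition $u|_{\Gamma}=0$ makes every boundary term at $\{x_{n}=0\}$ vanish except a single one, which is bounded by $h\,e^{2\varphi(0)/h}\|hNu\|_{L^{2}(\Gamma)}^{2}$. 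Inserting a cutoff $\chi=\chi(x_{n})$ that is $1$ on $\{x_{n}<\delta/2\}$ and supported in $\{x_{n}<\delta\}$ then gives, for all small $h$,
\begin{equation*}
    h\,\|\eh\,\chi u\|_{L^{2}(\Omega)}^{2}\le C\Big(\|\eh\,\chi(P(h)-E(h))u\|_{L^{2}(\Omega)}^{2}+\|\eh\,[P(h),\chi]u\|_{L^{2}(\Omega)}^{2}+h\,e^{\frac{2\varphi(0)}{h}}\|hNu\|_{L^{2}(\Gamma)}^{2}\Big),
\end{equation*}
where $C$ depends on $\lambda$ but not on $h$.

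To conclude, restrict the left-hand side to $\omega$, where $\varphi\ge\varphi(d_{0})$, so it is at least $h\,e^{2\varphi(d_{0})/h}\|u\|_{L^{2}(\omega)}^{2}$. On the right-hand side: on $\mathrm{supp}\,\chi$ one has $\varphi\le\varphi(0)$, so the quasimode hypothesis gives $\|\eh\,\chi(P(h)-E(h))u\|_{L^{2}(\Omega)}^{2}=\mathcal{O}(e^{2\varphi(0)/h}e^{-2\beta/h})$; and $[P(h),\chi]u$ is supported where $\delta/2<x_{n}<\delta$, where $\varphi\le\varphi(\delta/2)$, so using the elementary a priori bound $\|h\nabla u\|_{L^{2}}=\mathcal{O}(1)$ we get $\|\eh\,[P(h),\chi]u\|_{L^{2}(\Omega)}^{2}=\mathcal{O}(e^{2\varphi(\delta/2)/h})$. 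Dividing by $h\,e^{2\varphi(d_{0})/h}$ and setting $A:=\varphi(0)-\varphi(d_{0})>0$ and $B:=\varphi(d_{0})-\varphi(\delta/2)>0$ (both positive since $\varphi$ is strictly decreasing), we obtain
\begin{equation*}
    \|u\|_{L^{2}(\omega)}^{2}\le C\Big(h^{-1}e^{\frac{2(A-\beta)}{h}}+h^{-1}e^{-\frac{2B}{h}}+e^{\frac{2A}{h}}\|hNu\|_{L^{2}(\Gamma)}^{2}\Big).
\end{equation*}
Now choose $\lambda$ so large that $B=e^{\lambda\psi(d_{0})}-e^{\lambda\psi(\delta/2)}>\alpha_{0}$ --- possible since this tends to $+\infty$ as $\lambda\to\infty$ while $C$ does not depend on $h$ --- and put $\beta_{0}:=A+\alpha_{0}$ (enlarged if needed so that Theorem~\ref{T:One} applies). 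For quasimodes with $\beta>\beta_{0}$ the first two terms on the right are $o(e^{-2\alpha_{0}/h})$, whereas by Theorem~\ref{T:One} the left-hand side is at least $C_{0}^{2}e^{-2\alpha_{0}/h}$; hence, for $h$ small, the last term must dominate, giving $\|hNu\|_{L^{2}(\Gamma)}^{2}\ge c\,e^{-2(\alpha_{0}+A)/h}$, which is the asserted bound with $\alpha=\alpha_{0}+A$.

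The step I expect to be the main obstacle is the Carleman estimate up to $\Gamma$ itself: one must verify, for the chosen weight, that the surviving boundary term at $\Gamma$ has precisely the claimed sign and size --- this is where $\varphi'(0)<0$ and $u|_{\Gamma}=0$ are essential --- and one must track carefully which value of $\varphi$ enters each term, so that the equation and commutator contributions are genuinely beaten by the interior lower bound of Theorem~\ref{T:One} rather than the argument circularly re-deriving it.
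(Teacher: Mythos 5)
Your argument is correct, but it follows a genuinely different route from the paper. The paper does not go through Theorem~\ref{T:One} at all: it constructs, following Burq, two \emph{compatible} Morse functions $\psi_{1},\psi_{2}$ on all of $\overline{\Omega}$ with $N\psi_{i}>0$ on $\Gamma$ and $N\psi_{i}<0$ elsewhere, excises small balls around each weight's critical points, applies the boundary Carleman estimate once for each weight $e^{\gamma\psi_{i}}$, and uses the compatibility condition ($\psi_{i+1}>\psi_{i}$ near the critical points of $\psi_{i}$) to absorb the cutoff/annulus terms when the two estimates are added; the mass then comes directly from $\|u\|_{L^{2}(\Omega)}=1$. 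You instead work only in a collar of $\Gamma$ with a single weight $e^{\lambda\psi(x_{n})}$ that has no critical points there, and you import the needed interior mass from Theorem~\ref{T:One} applied to a slab $\omega$ inside the collar, beating the commutator term at the inner edge by making $B=e^{\lambda\psi(d_{0})}-e^{\lambda\psi(\delta/2)}>\alpha_{0}$ for $\lambda$ large (legitimate, since $\alpha_{0}$ is fixed by $\omega$ before $\lambda$ is chosen). This trades Burq's compatible-Morse-function construction and the two-estimate absorption for a dependence on Theorem~\ref{T:One}, which is fine since that theorem's proof is independent of Theorem~\ref{T:Two}; your constants $\alpha=\alpha_{0}+A$ and $\beta_{0}$ are of course different from, and less explicit than, the paper's $M-m$ and $\tilde{M}-m$, but the statement does not require sharp constants. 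One remark on the step you flag as the main obstacle: you do not need to re-derive the collar Carleman estimate by hand. The collar $\{0<x_{n}<\delta\}$ is a smooth compact domain, your convexified weight is a Carleman weight on it by exactly the computation in Proposition~\ref{P:CWgamma} (it only uses $\psi\geq 0$ and $\psi^{\prime}\neq 0$), and Theorem~\ref{P:CarlemanEst2} together with Remark~\ref{Re:BurqCO} applies verbatim with the theorem's distinguished boundary piece taken to be the inner boundary $\{x_{n}=\delta\}$ (where your cutoff makes $f$ vanish and $N\varphi<0$); the surviving majorant term is then $h\int_{\Gamma}\{|f|^{2}+|h\nabla f|^{2}\}$, which reduces to $h\,e^{2\varphi(0)/h}\|hNu\|_{L^{2}(\Gamma)}^{2}$ (up to constants) precisely because $u|_{\Gamma}=0$ kills both $|f|^{2}$ and the tangential part of $\nabla f$ --- no sign analysis of that boundary term is required, since it sits on the right-hand side.
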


\vspace{10pt}

Here, as in the rest of the paper, $n$ denotes the outward unit
normal,
$$\nabla^{i}=\sum_{j}(G^{\frac{1}{2}})^{ij}\partial_{x^{j}},\qquad \text{and }N=\sum_{i,j}n_{i}g^{ij}\partial_{x^{j}}.$$

\vspace{10pt}

We will give simple examples showing that these lower bounds are
sharp in $h$. Moreover, in both theorems the quasimode accuracy is
also sharp; that is, we will show that there are
$\mathcal{O}(e^{-\frac{\beta}{h}})$ quasimodes for which the
theorems do not hold, when $\beta>0$ is relatively small.

Despite the fact that the statements of our results are rather
simple and natural, they do not seem to be treated in the
literature, at least not in this context. We therefore believe
that a short, explicit proof could be useful. Results of this
type, stated as ``doubling properties'' of eigenfunctions of the
Laplacian on Riemannian manifolds, with or without boundary, have
been proven by Donnelly and Fefferman \cite{R:DF1}, \cite{R:DF2}.
Their Carleman estimate (or ``quantitative Aronszajn inequality'')
is different from the one used here
(Theorem~\ref{P:CarlemanEst2}), and it is valid for Lipschitz
metrics on smooth, closed manifolds, which allows them to use the
estimate after reflecting across the boundary. Jerison and Lebeau
further studied ``doubling properties'', but for \emph{sums} of
eigenfunctions of the Laplacian \cite{R:JL}. Moreover, we were
particularly inspired by Theorem 7.6 in the course notes of Evans and Zworski,
which gives exponential estimates from below for certain semiclassical Schr\"{o}dinger operators on
$\Rbb^{n}$ that are elliptic at infinity \cite{R:ZwSC}.

\vspace{10pt}

The basic tool in this paper is a boundary Carleman estimate,
which we now describe.

Let $\Omega_0$ and $G_{0}=(g^{ij}_{0})$ be temporary placeholders
for $\Omega$ and $G=(g^{ij})$. Then our semiclassical
Schr\"{o}dinger operator has principal symbol (in the sense of $h$-differential operators)
\begin{equation*}
    p(x,\xi)= \sum_{i,j}\xi_{i}\, g^{ij}_{0}(x) \, \xi_{j} + V,\quad (x,\xi)\in
    \overline{\Omega_{0}}\times\Rbb^{n},
\end{equation*}
and for $\varphi\in C^{\infty}(\overline{\Omega_{0}},\mathbb{R})$
we let
\begin{equation}\label{E:pphi}
    p_{\varphi}(x,\xi):=p(x,\xi+i\varphi^{\prime}_{x}),
\end{equation}
which is the leading semiclassical symbol of the conjugated operator
\begin{equation*}
    P_{\varphi}:= e^{\frac{\varphi}{h}}\circ P\circ
    e^{-\frac{\varphi}{h}}.
\end{equation*}
This operator is given explicitly by
\begin{equation*}
    P_{\varphi}=\sum_{i,j}\left(hD_{x^{i}}+i\varphi^{\prime}_{x^{i}}\right)\circ g_{0}^{ij}(x)\circ\left(hD_{x^{j}}+i\varphi^{\prime}_{x^{j}}\right)+V,\qquad D_{x}=\frac{1}{i}\partial_{x}.
\end{equation*}

Now suppose that $\varphi$ is a Carleman weight, meaning that
$\varphi\in C^{\infty}(\overline{\Omega_{0}},\mathbb{R})$ and that
\begin{equation}\label{E:CWdef}
    p_{\varphi}(x,\xi)=E(h) \Rightarrow
    \frac{1}{i}\{\overline{p_{\varphi}},p_{\varphi}\}(x,\xi)\geq c>0
\end{equation}
uniformly with respect to $h$, for some constant $c>0$. Here we are using the Poisson
bracket, given, for $f,g\in C^{\infty}$, by
\begin{equation*}
    \{f,g\}:=\sum_{j=1}^{n}\left(\frac{\partial f}{\partial \xi_{j}}\frac{\partial g}{\partial
    x^{j}}-\frac{\partial f}{\partial x^{j}}\frac{\partial g}{\partial
    \xi_{j}}\right).
\end{equation*}

With this set-up, we have the following boundary Carleman
estimate, which may be found as Proposition~3.2 of Burq's paper
\cite{R:BurqAJM}.

\begin{theorem}\label{P:CarlemanEst2}
Let $\Gamma$ be a union of connected components of
$\partial\Omega_0$, and let $\varphi$ be a Carleman weight on
$\overline{\Omega_{0}}$ such that $\nabla\varphi\neq 0$ on
$\Omega_0$ and such that $N\varphi\Big{|}_{\obdry_{0}}\neq 0$. If
$N\varphi\Big{|}_{\Gamma}<0$, then there exist constants $c>0$ and
$h_{1}>0$ such that
\begin{equation*}
    \begin{aligned}
    \int_{\Omega_0}|(P_{\varphi}(h)-E(h))f|^{2}
    + h\int_{\obdry_0\backslash\Gamma}\Big{\{}&|f|^{2}+|h\nabla f|^{2}\Big{\}}\\
    &\geq ch\int_{\Omega_0}\Big{\{}|f|^{2}+|h\nabla f|^{2}\Big{\}}
    \end{aligned}
\end{equation*}
for every $h\in(0,h_{1})$ and every $f\in
C^{\infty}(\overline{\Omega_{0}})$ with $f|_{\Gamma}\equiv 0$.
\end{theorem}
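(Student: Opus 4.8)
The plan is to reduce the a priori boundary Carleman estimate to a standard Gårding-type positivity argument after conjugation, following Burq. First I would work locally near a point of $\obdry_0$, straightening the boundary so that $\Omega_0$ becomes $\{x_n>0\}$ and using the hypothesis $N\varphi|_{\Gamma}<0$ to ensure that on $\Gamma$ the conjugated symbol sees the correct sign of the normal derivative of the weight. The key algebraic input is the Carleman-weight condition \eqref{E:CWdef}: writing $p_\varphi = \real p_\varphi + i\,\imag p_\varphi$, one has $\tfrac1i\{\overline{p_\varphi},p_\varphi\} = 2\{\real p_\varphi,\imag p_\varphi\}$, and the hypothesis says this bracket is bounded below by $c>0$ on the characteristic set $\{p_\varphi = E\}$. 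Combined with ellipticity of $P_\varphi$ away from that set (the principal part $\sum g^{ij}_0(\xi_i+i\varphi'_{x^i})(\xi_j+i\varphi'_{x^j})+V-E$ cannot vanish together with its real and imaginary parts unless we are on the characteristic variety), this is exactly the subellipticity hypothesis that powers an $L^2$ Carleman estimate with a gain of $h$.

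The main steps, in order: (i) Localize and flatten the boundary; partition $\overline{\Omega_0}$ into a neighborhood of $\obdry_0$ and an interior piece, and in the interior invoke the classical (boundaryless) semiclassical Carleman estimate, which holds under \eqref{E:CWdef} and $\nabla\varphi\neq0$. (ii) Near the boundary, integrate by parts in the quantity $\real\langle (P_\varphi-E)f, A f\rangle$ for a suitable first-order operator $A$ (essentially $A = \tfrac{h}{i}\partial_{x_n}$ plus lower-order corrections adapted to $\varphi$); the commutator $[P_\varphi^*, P_\varphi]$, by the sharp Gårding inequality, contributes $ch\int_{\Omega_0}(|f|^2+|h\nabla f|^2)$ up to $\mathcal{O}(h^2)$ errors absorbable for small $h$. (iii) The boundary terms generated by the integrations by parts live on $\obdry_0$; split them into the piece on $\Gamma$ and the piece on $\obdry_0\setminus\Gamma$. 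On $\Gamma$ we have $f|_\Gamma\equiv0$, so only $h\nabla f$ tangentially-vanishing terms survive, and the sign of $N\varphi|_\Gamma<0$ makes the surviving boundary contribution have the favorable sign (it can be dropped from the lower bound). On $\obdry_0\setminus\Gamma$ we have no sign control, so these terms are simply estimated by $h\int_{\obdry_0\setminus\Gamma}(|f|^2+|h\nabla f|^2)$ and moved to the left-hand side — which is precisely the form of the claimed inequality. (iv) Patch the local estimates with a partition of unity, paying attention that commutators with the cutoffs are lower order in $h$.

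The hard part is step (iii): controlling the boundary terms with the correct signs. The integration by parts in the normal variable produces, on $\obdry_0$, a quadratic form in the Cauchy data $(f, h\partial_n f)$ whose symbol is governed by $\imag p_\varphi$ restricted to the boundary, and one must check that the hypothesis $N\varphi|_\Gamma<0$ (equivalently $\partial_n\varphi<0$ after flattening) forces this form to be $\leq 0$ on $\Gamma$ when $f|_\Gamma = 0$, so that it helps rather than hurts. This is where the specific structure of $p_\varphi(x,\xi)=p(x,\xi+i\varphi'_x)$ enters: the boundary symbol factors through $\xi_n \mapsto \xi_n + i\varphi'_{x_n}$, and the sign of $\varphi'_{x_n}$ selects which root of the (quadratic in $\xi_n$) symbol is "incoming," exactly as in the proof of the Lopatinski–Shapiro-type condition for the conjugated operator. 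A secondary technical point is ensuring that the constant $c$ in the conclusion is uniform in $h$; this follows because all symbol estimates in \eqref{E:CWdef} and the ellipticity are assumed uniform in $h$, and the sharp Gårding inequality is applied at fixed semiclassical order. Since this is Proposition~3.2 of \cite{R:BurqAJM}, I would present the argument in condensed form, emphasizing the role of \eqref{E:CWdef} and the boundary sign condition and referring to \cite{R:BurqAJM} for the remaining routine commutator bookkeeping.
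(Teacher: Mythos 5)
The paper does not actually prove this theorem: it is imported verbatim as Proposition~3.2 of Burq \cite{R:BurqAJM}, with Remark~\ref{Re:BurqCO} only indicating the Lebeau--Robbiano/Burq method (partition of unity, local flattening of the boundary, with errors absorbed for small $h$). Your outline follows exactly that standard route---conjugation, the bracket condition \eqref{E:CWdef} together with ellipticity off the characteristic set, a sharp G\aa rding step, and the sign $N\varphi|_{\Gamma}<0$ to make the boundary quadratic form on $\Gamma$ harmless when $f|_{\Gamma}=0$ while the uncontrolled terms on $\partial\Omega_{0}\setminus\Gamma$ are moved to the left-hand side---so it is consistent with the paper's treatment, and, like the paper, it ultimately defers the substantive commutator and boundary-form computations to \cite{R:BurqAJM} rather than carrying them out.
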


\vspace{10pt}

\begin{remark}\label{Re:BurqCO}
    \emph{
    Estimates of this type, and their application to unique
    continuation problems, have a long, distinguished history.
    H\"{o}rmander's classic text \cite{R:Ho63} contains a
    systematic treatment of such estimates in the boundary-less
    case. The estimates up to the boundary were originally proven
    by Lebeau and Robbiano in the case when $V\equiv 0$ and
    $E(h)\equiv 0$ \cite{R:LRChaleur}, and Burq later observed that their proof extends to more general
    operators, including semiclassical Schr\"{o}dinger operators of
    the type considered here \cite{R:BurqAJM}. In all cases, the
    proof uses a partition of unity to reduce
    to local results; in the presence of the boundary, a change of
    variables is then applied to locally straighten the boundary
    segment. This is possible because the induced error terms do
    not affect the estimate (but possibly taking a smaller $h_{1}>0$).}

    \emph{Moreover, Theorem~\ref{P:CarlemanEst2} may be generalized to a
    more useful form--the form used in this
    paper--due to the fact that it is at heart a local result. If
    the function $f$ vanishes in a neighborhood of some boundary
    component $\Gamma_{0}$, then the result still holds, even if
    the condition $N\varphi\Big{|}_{\Gamma_{0}}\neq
    0$ fails to hold.}
\end{remark}

A central problem in the use of Carleman estimates is the
construction of suitable Carleman weights, and a classical
technique is to convexify a function which has no critical points
(see, for example, \cite{R:Ho63}, p.205, and \cite{R:BurqActa}).
In the proof of Theorem~\ref{T:One} we put the critical points
inside the set $\omega$, then apply the Carleman estimate to the
complement of $\omega$. In the proof of Theorem~\ref{T:Two}, we
use two Carleman weights with a certain compatibility condition
that allows us to piece together two Carleman estimates; this
follows a method of Burq \cite{R:BurqActa}.

As pointed out by the referee, our method for constructing Carleman weights is similar to that used by Chae, Imanuvilov, and Kim in the context of control theory \cite{R:ChaeImanKim}. For a connected bounded domain $\Omega\subset\Rbb^{n}$ with boundary $\partial\Omega\in C^{2}$, they construct and then convexify a function $\psi\in C^{2}(\overline{\Omega})$ which vanishes on $\partial\Omega$ and has its critical points in a given fixed subdomain of $\Omega$. In our case, however, it is important that the normal derivatives of $\psi$ on connected components of $\partial\Omega$ have predetermined signs, and we do not need $\psi$ to vanish on $\partial\Omega$.

We prove Theorems~\ref{T:One} and \ref{T:Two} in
Sections~\ref{S:Useful} and \ref{S:BdryTrace}, respectively, where
we also give remarks on the sharpness of the estimates.

From now on we will omit ``(h)'' where the $h$-dependence is
obvious. And in stating estimates we sometimes find it convenient
to write $X\lesssim Y$ or $Y\gtrsim X$ whenever $X\leq CY$ for
some constant $C>0$, which could possibly depend on $n$, the
dimension of $\Omega$.

\vspace{10pt}

\section{A Local Lower Bound}\label{S:Useful}

In proving Theorem~\ref{T:One} we begin with a useful elliptic
estimate:

\begin{proposition}\label{L:Useful}
    Let $\chi$, $u$ $\in C^{\infty}(\overline{\Omega},\Cbb)$. Then
    \begin{equation*}
        h^{2}\int|\chi|^{2}|\nabla u|^{2} \lesssim
        \int_{\text{supp}\chi}(|(P-E(h))u|^{2}+|u|^{2})
        + h^{2}\int_{\partial\Omega}|\chi|^{2}|uNu|
    \end{equation*}
    for all $h$ small enough.
\end{proposition}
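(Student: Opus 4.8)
The plan is to derive the estimate from an integration-by-parts identity for the conjugated quantity $\langle (P-E)u, \chi^2 u\rangle$, or more directly from the weak formulation of $P$ applied to the test function $|\chi|^2 u$. First I would write out $\int (P-E)u \cdot \overline{|\chi|^2 u}$ and integrate by parts in the $-h^2\Delta$ term. Since $\Delta = \sum_{i,j}\partial_{x^i} g^{ij}\partial_{x^j}$, moving one derivative onto the test function produces the good term $h^2 \int \sum_{i,j} g^{ij}(\partial_{x^j} u)\overline{(\partial_{x^i}(|\chi|^2 u))}$ together with a boundary contribution $-h^2\int_{\partial\Omega} |\chi|^2 \,\overline{u}\, Nu$, where $N = \sum_{i,j} n_i g^{ij}\partial_{x^j}$ is exactly the normal-type operator in the statement. (Here $|\chi|^2 u$ need not vanish on $\partial\Omega$ since $\chi$ is only assumed smooth, not compactly supported — this is why the boundary term appears.)

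Next I would expand $\partial_{x^i}(|\chi|^2 u) = |\chi|^2 \partial_{x^i} u + u\,\partial_{x^i}|\chi|^2$. The first piece gives the desired left-hand side: by positive-definiteness of $G=(g^{ij})$ on the compact set $\overline\Omega$, one has $\sum_{i,j} g^{ij}\xi_i\overline{\xi_j} \gtrsim |\xi|^2$, so $h^2\int |\chi|^2 \sum_{i,j} g^{ij}(\partial_{x^j}u)\overline{(\partial_{x^i}u)} \gtrsim h^2\int |\chi|^2 |\nabla u|^2$ (recalling $\nabla^i = \sum_j (G^{1/2})^{ij}\partial_{x^j}$, so $|\nabla u|^2 = \sum_{i,j} g^{ij}\partial_{x^j}u\,\overline{\partial_{x^i}u}$). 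The cross piece $h^2\int u\,\overline{u}\,\sum_{i,j} g^{ij}(\partial_{x^j}u)\,\partial_{x^i}|\chi|^2$ — wait, more carefully $h^2\int \sum_{i,j} g^{ij}(\partial_{x^j}u)\,\overline{u\,\partial_{x^i}|\chi|^2}$ — is bounded, using Cauchy--Schwarz, by $\tfrac12 h^2\int |\chi|^2|\nabla u|^2$ plus $C h^2\int_{\mathrm{supp}\,\nabla\chi}|u|^2$; the first term is absorbed into the left side. Everything else is collected: the $V|\chi|^2|u|^2$ and $E|\chi|^2|u|^2$ terms are $\lesssim \int_{\mathrm{supp}\chi}|u|^2$ since $V$ is bounded on $\overline\Omega$ and $E(h)\in[a,b]$, and $\int (P-E)u\cdot\overline{|\chi|^2 u} \lesssim \int_{\mathrm{supp}\chi}(|(P-E)u|^2 + |u|^2)$ by Cauchy--Schwarz; the boundary term is exactly $h^2\int_{\partial\Omega}|\chi|^2|u\,Nu|$ up to sign. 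Since $h^2 \leq 1$ for $h$ small, the coefficient on the $\int_{\mathrm{supp}\chi}|u|^2$ terms can be absorbed into the implied constant.

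The one step that needs care — the "main obstacle," though it is mild — is the absorption argument: I must make sure the constant multiplying $\int|\chi|^2|\nabla u|^2$ coming from the cross term is strictly less than the ellipticity constant, so that it can genuinely be moved to the left-hand side. This is handled by inserting a parameter $\varepsilon$ in Young's inequality ($2ab \leq \varepsilon a^2 + \varepsilon^{-1}b^2$) and choosing $\varepsilon$ small relative to the ellipticity constant of $G$; the resulting $\varepsilon^{-1}$ is then a harmless fixed constant in front of $h^2\int_{\mathrm{supp}\nabla\chi}|u|^2 \leq h^2\int_{\mathrm{supp}\chi}|u|^2 \lesssim \int_{\mathrm{supp}\chi}|u|^2$. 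A secondary point is bookkeeping with complex conjugates and the non-symmetric-looking (but in fact symmetric, since $g^{ij}=g^{ji}$) quadratic form; taking real parts throughout keeps this clean. No density argument is needed since the statement already assumes $\chi, u \in C^\infty(\overline\Omega)$.
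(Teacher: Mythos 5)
Your proposal is correct and follows essentially the same route as the paper: pair $(P-E(h))u$ with $|\chi|^{2}u$, integrate by parts to produce the boundary term with $N$, and absorb the cross term involving $\nabla\chi$ after Cauchy--Schwarz. The only (harmless) difference is in the absorption step: you use an $\varepsilon$-Young inequality against the ellipticity constant of $G$, while the paper puts both powers of $h$ on the gradient factor and absorbs an $h^{4}\int|\chi|^{2}|\nabla u|^{2}$ term into the $h^{2}$ left-hand side for $h$ small.
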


\begin{proof}

The first part of the proof is an integration by parts:
\begin{equation*}
    \begin{aligned}
    \int\Big((P-E(h))u\Big)\,\overline{u}|\chi|^{2}
    &= h^{2}\int\nabla u\cdot\nabla(\overline{u}|\chi|^{2}) -
    h^{2}\int_{\partial\Omega}\overline{u}|\chi|^{2}Nu\\
    &\qquad\qquad\qquad
    +\int(V-E(h))|\chi u|^{2}\\
    &= h^{2}\int|\chi|^{2}|\nabla u|^{2} + 2h^{2}\int\overline{u}\nabla u\cdot\real(\overline{\chi}\nabla\chi) \\
    &\qquad\qquad\qquad -
    h^{2}\int_{\partial\Omega}\overline{u}|\chi|^{2}Nu + \int (V-E(h))|\chi u|^{2}.
    \end{aligned}
\end{equation*}
Then, by elementary estimates,
\begin{equation*}
    \begin{aligned}
    h^{2}\int|\chi|^{2}|\nabla u|^{2}
    &\lesssim \int|\chi (P-E(h))u|^{2} + \int|\chi u|^{2}\\
    &\qquad\qquad\qquad
        + h^{2}\int_{\partial\Omega}|\chi|^{2}|uNu|
        + h^{2}\int|u\nabla\chi||\chi\nabla u|\\
    &\lesssim \int|\chi (P-E(h))u|^{2} +
    \int(|\chi|^{2}+|\nabla\chi|^{2})|u|^{2}\\
        &\qquad\qquad\qquad + h^{2}\int_{\partial\Omega}|\chi|^{2}|uNu| + h^{4}\int|\chi|^{2}|\nabla u|^{2}.
    \end{aligned}
\end{equation*}
We absorb the last term on the right side into the left side to
conclude the proof.
\end{proof}

\vspace{20pt}

We can apply Proposition~\ref{L:Useful} to quasimodes with ``zero
boundary conditions''. These necessarily belong to the domain of
(the Dirichlet realization of) $P$, which is
\begin{equation*}
    \mathcal{D}(P):=H^{2}(\Omega)\cap H^{1}_{0}(\Omega).
\end{equation*}
Hence the computations in the preceding proof are still justified.
We can equip this set of functions with semiclassical norms; for
instance, in the following lemma we control the semiclassical
Sobolev norm $H^{1}_{h}$, given by
\begin{equation*}
    ||u||_{H^{1}_{h}} := \left(\int(|u|^{2}+|h\nabla u|^{2}) \right)^{\frac{1}{2}}.
\end{equation*}

\begin{lemma}\label{Cor:Useful}
    Let $u\in H^{2}(\Omega)\cap H^{1}_{0}(\Omega)$ be such that
    \begin{equation*}
        ||(P-E(h))u||_{L^{2}(\Omega)}=\mathcal{O}(f(h))||u||_{L^{2}(\Omega)}
    \end{equation*}
    for some function $f\geq 0$. Also let $\omega$, $\tilde{\omega}$
    be open subsets of $\Omega$ such that
    $\tilde{\omega}\subset\subset\omega\subset\subset\Omega$. Then
    \begin{equation*}
        ||u||_{H^1_h(\tilde{\omega})}\lesssim
        ||u||_{L^2(\omega)}+\mathcal{O}(f(h))||u||_{L^{2}(\Omega)}.
    \end{equation*}
\end{lemma}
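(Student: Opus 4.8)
The plan is to apply Proposition~\ref{L:Useful} with a cutoff $\chi$ that equals $1$ on $\tilde\omega$ and is supported in $\omega$, exploiting the fact that the boundary term vanishes since $\omega\subset\subset\Omega$. First I would choose $\chi\in C_c^\infty(\omega)$ with $\chi\equiv 1$ on a neighborhood of $\overline{\tilde\omega}$, so that $\chi|_{\partial\Omega}\equiv 0$ and hence the term $h^2\int_{\partial\Omega}|\chi|^2|uNu|$ in Proposition~\ref{L:Useful} is identically zero. (Strictly, Proposition~\ref{L:Useful} is stated for $u\in C^\infty(\overline\Omega)$, but since $u\in H^2(\Omega)\cap H^1_0(\Omega)$ the integration by parts in its proof is still justified, as remarked in the text after the proposition; alternatively one approximates $u$ in $H^2$ by smooth functions.) Applying the proposition then gives
\begin{equation*}
    h^{2}\int_{\tilde\omega}|\nabla u|^{2}\leq h^{2}\int|\chi|^{2}|\nabla u|^{2}\lesssim \int_{\mathrm{supp}\,\chi}\bigl(|(P-E(h))u|^{2}+|u|^{2}\bigr)\lesssim \int_\omega|u|^2 + \mathcal{O}(f(h)^2)\,\|u\|_{L^2(\Omega)}^2,
\end{equation*}
where in the last step I used $\mathrm{supp}\,\chi\subset\omega$ and the hypothesis $\|(P-E(h))u\|_{L^2(\Omega)}=\mathcal{O}(f(h))\|u\|_{L^2(\Omega)}$ (so its square over any subset is $\mathcal{O}(f(h)^2)\|u\|_{L^2(\Omega)}^2$).

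Next I would assemble the $H^1_h(\tilde\omega)$ norm: by definition
\begin{equation*}
    \|u\|_{H^1_h(\tilde\omega)}^2 = \int_{\tilde\omega}|u|^2 + h^2\int_{\tilde\omega}|\nabla u|^2 \leq \int_\omega|u|^2 + h^2\int_{\tilde\omega}|\nabla u|^2 \lesssim \int_\omega|u|^2 + \mathcal{O}(f(h)^2)\,\|u\|_{L^2(\Omega)}^2,
\end{equation*}
using $\tilde\omega\subset\omega$ for the first term and the gradient bound just obtained for the second. Taking square roots and using $\sqrt{A+B}\leq\sqrt A+\sqrt B$ yields
\begin{equation*}
    \|u\|_{H^1_h(\tilde\omega)} \lesssim \|u\|_{L^2(\omega)} + \mathcal{O}(f(h))\,\|u\|_{L^2(\Omega)},
\end{equation*}
which is the claimed estimate. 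All of this is valid for $h$ small enough, as required by Proposition~\ref{L:Useful}.

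There is no serious obstacle here; the lemma is essentially a packaging of Proposition~\ref{L:Useful} in the interior. The only points requiring a little care are (i) the justification of the integration-by-parts computation for $u$ merely in $H^2(\Omega)\cap H^1_0(\Omega)$ rather than $C^\infty(\overline\Omega)$ — handled by density or by noting the identity in the proof of Proposition~\ref{L:Useful} extends by continuity — and (ii) keeping track of the fact that the constants absorbed into $\lesssim$ depend on $\chi$, hence on the pair $(\tilde\omega,\omega)$, but not on $u$ or $h$; since $\tilde\omega\subset\subset\omega\subset\subset\Omega$ such a $\chi$ with $|\nabla\chi|$ bounded exists, so this dependence is harmless. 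I would state the choice of $\chi$ explicitly and then let the two displays above carry the argument.
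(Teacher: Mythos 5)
Your proposal is correct and is essentially the paper's own argument: choose $\chi\in C^{\infty}$ with $\chi\equiv 1$ on $\tilde\omega$ and $\mathrm{supp}\,\chi\subset\omega$, apply Proposition~\ref{L:Useful} (the boundary term vanishing since $\chi$ is compactly supported in $\Omega$), and use the quasimode hypothesis to bound the $(P-E(h))u$ term. The remarks on justifying the integration by parts for $u\in H^{2}(\Omega)\cap H^{1}_{0}(\Omega)$ match the paper's own comment following Proposition~\ref{L:Useful}.
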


\begin{proof} Let $\chi\in C^{\infty}(\overline{\Omega})$ be such that $0\leq\chi\leq 1$,
$\chi\equiv 1$ on $\tilde{\omega}$, and such that
supp$\chi\subset\omega$. Then, by Lemma~\ref{L:Useful}, we have
\begin{equation*}
    \begin{aligned}
    ||u||_{H^1_h(\tilde{\omega})}^2 &= \int_{\tilde{\omega}}\Big{[}|u|^2 + |h\nabla
    u|^2\Big{]}\\
    &\lesssim \int_{\tilde{\omega}}|u|^2 +
    \int_{\text{supp}\chi}(|(P-E(h))u|^{2}+|u|^2)\\
    &\lesssim ||u||_{L^2(\omega)}^2+\mathcal{O}(f(h)^{2})||u||^{2}_{L^{2}(\Omega)}.
    \end{aligned}
\end{equation*}
\end{proof}

\vspace{10pt}

We now construct a Carleman weight in the standard way: by
``convexifying'' a function which has no critical points, an idea
that goes back at least to H\"{o}rmander's classic book
(\cite{R:Ho63}, p.205). Moreover, we will find a Carleman weight whose outward normal derivative is negative everywhere on $\partial\Omega$, so that in using the Carleman estimate, Theorem \ref{P:CarlemanEst2}, we may discard the boundary term.

It is convenient to start with a Morse
function--that is, a smooth real-valued function on $\Omega$
having no degenerate critical points. For this we may first take
\emph{any} $\psi_{00}\in C^{\infty}(\overline{\Omega})$ with
$N\psi_{00}|_{\partial\Omega}<0$. We then smoothly extend it to a
neighborhood of $\overline{\Omega}$, and approximate the extension
by a Morse function $\psi_{0}$ in the $C^{1}$ topology, so that
$N\psi_{0}|_{\partial\Omega}<0$. We can do this because, for any
smooth manifold $X$, Morse functions are dense in
$C^\infty(X,\mathbb{R})$ (see, for instance, \cite{R:GG}).
Moreover, we choose $\psi_{0}$ to be non-negative on $\overline{\Omega}$,
simply by adding a sufficiently large constant.

Now let $x_1$, . . . , $x_N$ be the (necessarily finitely many) critical
points of $\psi_0$ on $\overline{\Omega}$; we then know that they are away from $\partial\Omega$. Also let $\omega_{0}$ be an open subset of
$\Omega$ such that $\omega_0\subset\subset\Omega$.

\begin{lemma}
    There exists a diffeomorphism $\varkappa:\overline{\Omega}\rightarrow\overline{\Omega}$
    such that $\varkappa(x)=x$ near $\partial\Omega$ and
    such that $\varkappa(x_j)\in\omega_0$ $\forall j$.
\end{lemma}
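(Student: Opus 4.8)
The plan is to build $\varkappa$ as the time-one flow of a compactly supported vector field that drags the critical points into $\omega_0$. First I would fix, for each $j$, a smooth path $\gamma_j:[0,1]\to\Omega$ with $\gamma_j(0)=x_j$ and $\gamma_j(1)\in\omega_0$; since $\Omega$ is open and connected such paths exist, and by a small perturbation we may take the $N$ paths to be pairwise disjoint embedded arcs lying in the interior of $\Omega$ (general position: arcs in dimension $n\ge 2$ generically miss each other; in dimension $n=1$ one orders the points and argues directly, or simply notes that the one-dimensional case is handled by a monotone reparametrization of the interval). Thicken each arc to a tubular neighborhood $T_j\subset\subset\Omega$, with the $T_j$ pairwise disjoint and disjoint from a neighborhood of $\partial\Omega$.

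Next I would produce a smooth vector field $W_j$ supported in $T_j$ whose flow $\phi^t_j$ carries $x_j$ along $\gamma_j$; concretely, extend $\dot\gamma_j$ to a vector field on the tube and cut off with a bump function equal to $1$ on a smaller tube containing $\gamma_j([0,1])$. Set $W=\sum_j W_j$; this is well defined since the $W_j$ have disjoint supports, and $W$ vanishes near $\partial\Omega$. Let $\varkappa:=\phi^1_W$ be its time-one flow. Because $W$ has compact support in $\Omega$, the flow is complete and each $\phi^t_W$ is a diffeomorphism of $\overline{\Omega}$; since $W\equiv 0$ near $\partial\Omega$, we have $\phi^t_W(x)=x$ there, so in particular $\varkappa=\mathrm{id}$ near $\partial\Omega$. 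By construction $\varkappa(x_j)=\phi^1_{W_j}(x_j)=\gamma_j(1)\in\omega_0$ for every $j$, which is exactly what is required.

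The one point needing slight care — the main (mild) obstacle — is arranging the arcs $\gamma_j$ to be disjoint and contained in the interior, so that the local vector fields patch together without interference; in dimension $n\ge 2$ this is a routine transversality argument (the arcs are $1$-dimensional and $\Omega$ is at least $2$-dimensional, so generic arcs are disjoint), and in dimension $n=1$ one instead simply takes $\varkappa$ to be a diffeomorphism of the interval, fixing the endpoints, that pushes each critical point into the subinterval $\omega_0$, which exists since $\omega_0$ is a nonempty open subinterval. Everything else is standard ODE flow theory, and the claimed properties of $\varkappa$ follow immediately from the support properties of $W$.
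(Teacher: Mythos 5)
Your proposal is correct and follows essentially the same route as the paper: disjoint arcs from the critical points into $\omega_0$, a compactly supported vector field along each arc (vanishing near $\partial\Omega$), and the time-one flow as the diffeomorphism, the only cosmetic difference being that you sum the disjointly supported fields while the paper composes the individual time-one flows $\varkappa_j$. Your extra remarks on making the arcs disjoint (transversality in dimension $n\geq 2$, the separate one-dimensional case) fill in a detail the paper simply asserts, and are fine.
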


\begin{proof} For each $x_j$ we take a simple smooth curve
$\gamma_j:[0,1]\mapsto\Omega$ such that $\gamma_j(0)=x_j$ and
$\gamma_j(1)\in\omega_0$. We may choose the curves such that the
$\gamma_j([0,1])$ are pairwise disjoint. Let $N_j$ be a
neighborhood of $\gamma_j([0,1])$ such that the $N_j$ are pairwise
disjoint.

We take a $C^\infty$ vector field $X_j$ such that
$X_j(\gamma_j(t))=\gamma_j^{\prime}(t)$ $\forall t\in [0,1]$ and
such that $X_j$ is zero outside of $N_j$.

Since $X_j$ is a compactly supported $C^{\infty}$ vector field, it
induces a flow which fixes $\overline{\Omega}\cap\complement N_j$
and which induces a diffeomorphism $\varkappa_j$ of
$\overline{\Omega}$, the time $1$ flow of $X_{j}$, taking $x_j$
into $\omega_0$. Then $\varkappa:=\varkappa_1 \circ \dotsb \circ
\varkappa_N$ is the desired diffeomorphism.
\end{proof}

\vspace{10pt}

Let $\psi:=\psi_0\circ\varkappa^{-1}$. Then $\psi$ has finitely
many critical points, all of which are contained in $\omega_0$,
and $N\psi\Big{|}_{\partial\Omega}<0$.

Finally, let $$\varphi:=e^{\gamma\psi},$$ where $\gamma>0$ is to
be determined.

\vspace{10pt}

\begin{proposition}\label{P:CWgamma}
    For $\gamma$ large enough, $\varphi$ is a Carleman weight on
    $\overline{\Omega}\backslash\omega_0$.
\end{proposition}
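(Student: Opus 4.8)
The plan is to verify the Carleman condition \eqref{E:CWdef} directly by computing $\frac{1}{i}\{\overline{p_\varphi},p_\varphi\}$ for $\varphi = e^{\gamma\psi}$ and showing it is positive on the characteristic set $\{p_\varphi = E(h)\}$, provided $\gamma$ is taken large and we stay on the compact set $\overline{\Omega}\setminus\omega_0$ where $\psi$ has no critical points. First I would record that $\varphi' = \gamma e^{\gamma\psi}\psi' = \gamma\varphi\,\psi'$, so that on $\overline{\Omega}\setminus\omega_0$ we have $|\varphi'| \geq c_0\gamma$ for a constant $c_0 > 0$ coming from the lower bound on $|\nabla\psi|$ on that compact set. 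Writing $p_\varphi(x,\xi) = p(x,\xi) - \langle G_0\varphi',\varphi'\rangle + 2i\langle G_0\xi,\varphi'\rangle$, one sees $\real p_\varphi = \sum g_0^{ij}\xi_i\xi_j + V - \sum g_0^{ij}\varphi'_{x^i}\varphi'_{x^j}$ and $\imag p_\varphi = 2\sum g_0^{ij}\xi_i\varphi'_{x^j}$, and the bracket $\frac{1}{i}\{\overline{p_\varphi},p_\varphi\} = 2\{\real p_\varphi, \imag p_\varphi\}$.

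The key computation is the standard convexification identity: one expands $\{\real p_\varphi,\imag p_\varphi\}$ and groups the terms according to their degree of homogeneity in $\gamma$. The dominant contribution, as $\gamma\to\infty$, is a term of the form $4\gamma^3\langle G_0\psi',\psi'\rangle$ times a positive quadratic form in $(\xi,\psi')$ built from the Hessian and first derivatives of $\psi$ together with the metric — more precisely, on the characteristic variety the leading term is a positive constant multiple of $\gamma^3 e^{3\gamma\psi}\langle G_0\psi',\psi'\rangle^2 \big(1 + o(1)\big)$, which is bounded below by $c\gamma^3$ uniformly on $\overline{\Omega}\setminus\omega_0$. The remaining terms — those involving derivatives of $G_0$ and $V$, the Hessian of $\psi$ contracted against $\xi$, etc. — are $O(\gamma^2)$ on the characteristic set, because the relation $\real p_\varphi = E(h)$ forces $|\xi|^2 = \langle G_0\varphi',\varphi'\rangle + E(h) - V = O(\gamma^2)$, i.e. $|\xi| = O(\gamma)$, so every such lower-order term carries at most the power $\gamma^2$. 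Choosing $\gamma$ large enough then makes the whole expression $\geq c\gamma^3 > 0$, uniformly in $h$; this is exactly \eqref{E:CWdef}.

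The main obstacle is bookkeeping: one must track the $\gamma$-powers carefully through the Poisson bracket, being sure that (i) the leading $\gamma^3$ term is genuinely nonnegative and in fact bounded below — this uses positive-definiteness of $G_0$ and, crucially, that $\psi'\neq 0$ on $\overline{\Omega}\setminus\omega_0$ so that $\langle G_0\psi',\psi'\rangle \geq c_0^2 > 0$ there — and that (ii) all error terms are truly $O(\gamma^2)$ after imposing $\real p_\varphi = E(h)$, with constants uniform in $h$ (they are, since $G_0$, $V$, $\psi$ are smooth on the compact set $\overline{\Omega}$ and $E(h)\in[a,b]$). One also needs $\varphi\in C^\infty(\overline{\Omega}\setminus\omega_0)$, which is immediate since $\psi$ is smooth. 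The condition $\nabla\varphi\neq 0$ on $\Omega\setminus\omega_0$ and $N\varphi|_{\partial\Omega}\neq 0$ — needed later in Theorem~\ref{P:CarlemanEst2} — follow from $\varphi' = \gamma\varphi\psi'$ together with $\nabla\psi\neq 0$ off $\omega_0$ and $N\psi|_{\partial\Omega} < 0$ (so $N\varphi|_{\partial\Omega} = \gamma\varphi\, N\psi|_{\partial\Omega} < 0$, which is what makes the boundary term discardable), but strictly speaking these belong to the verification that $\varphi$ can be used in the Carleman estimate rather than to the bare statement that it is a Carleman weight.
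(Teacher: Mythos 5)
Your overall strategy is exactly the paper's: compute $\tfrac{1}{i}\{\overline{p_\varphi},p_\varphi\}=2\{\real p_\varphi,\imag p_\varphi\}$ for $\varphi=e^{\gamma\psi}$, use $\real p_\varphi=E(h)$ to control $|\xi|$ on the characteristic set, and let the convexification term dominate for $\gamma$ large, using $|\nabla\psi|>0$ on the compact set $\overline{\Omega}\backslash\omega_0$ and positive definiteness of $G$. However, the bookkeeping --- which you yourself identify as the main obstacle --- is done incorrectly in a way that hides a real subtlety. The characteristic relation gives $\leftexp{t}{\xi}G\xi = \leftexp{t}{\varphi'}G\varphi' + E(h)-V$ with $\varphi'=\gamma e^{\gamma\psi}\psi'$, so the correct bound is $|\xi|\leq C\gamma e^{\gamma\psi}$, \emph{not} $|\xi|=O(\gamma)$: the factor $e^{\gamma\psi(x)}$ is not uniformly bounded as $\gamma\to\infty$ (it is as large as $e^{\gamma\max\psi}$), so your claims ``leading term $\geq c\gamma^3$ uniformly'' and ``errors $=O(\gamma^2)$ uniformly'' are both false as stated. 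The correct count is: leading term $4e^{3\gamma\psi}\gamma^4(\leftexp{t}{\psi'}G\psi')^2$ (power $\gamma^4$, not $\gamma^3$), and the error terms that are quadratic in $\xi$ or cubic in $\varphi'$ are $O(\gamma^3 e^{3\gamma\psi})$; the domination must therefore be argued \emph{pointwise}, with the common factor $e^{3\gamma\psi(x)}$ cancelling, rather than by comparing $x$-independent powers of $\gamma$.

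There is a second, related omission: the term coming from the potential, $2\{V,\leftexp{t}{\varphi'}G\xi\}=-2\gamma e^{\gamma\psi}\,\leftexp{t}{V'}G\psi'$, carries only a \emph{single} factor $e^{\gamma\psi}$, so it is not automatically absorbed by $\gamma^4 e^{3\gamma\psi}$: one needs $\gamma^3 e^{2\gamma\psi}\gtrsim 1$, which fails wherever $\psi<0$. This is precisely why the paper normalizes $\psi\geq 0$ on $\overline{\Omega}$ (by adding a large constant before convexifying), and that normalization is also what makes the bound $|\xi|\leq C\gamma e^{\gamma\psi}$ valid uniformly in $h$. Your argument never invokes $\psi\geq 0$, and in your uniform-in-$x$ power counting the issue disappears only because the exponential factors were dropped. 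With these two corrections --- pointwise comparison with matching $e^{3\gamma\psi}$ factors, and the normalization $\psi\geq 0$ to handle the $\nabla V$ term --- your proof becomes the paper's proof.
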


\begin{proof} We have $\varphi^\prime = \gamma e^{\gamma\psi}\psi^\prime$,
$\varphi^{\prime\prime}=e^{\gamma\psi}(\gamma^2\psi^\prime{}^t\psi^\prime+\gamma\psi^{\prime\prime})$,
and $p_\varphi=E(h)$ implies that ${}^t\xi G\varphi^\prime=0$ and
${}^t\xi G\xi + V ={}^t\varphi^\prime G\varphi^\prime+E(h)$. Hence
\begin{equation}\label{E:controlxi}
    p_\varphi=E(h)\quad \text{implies}\quad |\xi|\leq C\gamma
    e^{\gamma\psi}
\end{equation}
where the bound is independent of $h$, as $E(h)\in[a,b]$. We now compute
\begin{equation*}\label{E:PBras00b}
    \begin{aligned}
    \{\text{Re}p_\varphi,\text{Im}p_\varphi\}
        &= 4\leftexp{t}\xi G\varphi^{\prime\prime}G\xi + 4\leftexp{t}\varphi^\prime G\varphi^
            {\prime\prime}G\varphi^\prime +
            2G^\prime(\varphi^\prime,\varphi^\prime,G\varphi^\prime)\\
                &\qquad+ 4G^\prime(\varphi^\prime,\xi,G\xi) - 2
                G^\prime(\xi,\xi,G\varphi^\prime) + 2\{V,\leftexp{t}\varphi^\prime G\xi\}\\
        &= 4e^{\gamma\psi}\gamma\leftexp{t}\xi G\psi^{\prime\prime}G\xi+4e^{3\gamma\psi}(\gamma^4(\leftexp{t}\psi^\prime
            G\psi^\prime)^2 +\gamma^3\leftexp{t}\psi^\prime
            G\psi^{\prime\prime}G\psi^\prime)\\
                &\qquad+
                2e^{3\gamma\psi}\gamma^3 G^\prime(\psi^\prime,\psi^\prime,G\psi^\prime)
                    +4\gamma
                    e^{\gamma\psi}G^\prime(\xi,\psi^\prime,G\xi)\\
                    &\qquad -2\gamma e^{\gamma\psi}G^\prime(\xi,\xi,G\psi^\prime)
                    + 2\gamma e^{\gamma\psi}\{V,\leftexp{t}\psi^\prime G\xi\}\\
        &= 4e^{3\gamma\psi}(\gamma^4(\leftexp{t}\psi^\prime G\psi^\prime)^2 +
        O(\gamma^3))-2\gamma
        e^{\gamma\psi}\leftexp{t}V^{\prime}G\psi^{\prime}.
    \end{aligned}
\end{equation*}
where in the last line we have used (\ref{E:controlxi}). Since
$|\psi^\prime|
> 0$ in $\overline{\Omega}\backslash\omega_0$, and since $G$ is of course
positive definite, the $\gamma^{4}$ term dominates the
$\gamma^{3}$ term when $\gamma$ is sufficiently large. The term
with the potential is also dominated, since we have chosen
$\psi\geq 0$ for this very purpose. Hence $\varphi$ is a Carleman
weight on $\overline{\Omega}\backslash\omega_0$ for $\gamma>0$
large enough.
\end{proof}

\vspace{10pt}

\begin{proof} \emph{(of Theorem~\ref{T:One}.)} Let $\omega_0$, $\omega_1$, and $\omega_2$ be open subsets of
$\Omega$ such that
$\omega_0\subset\subset\omega_{1}\subset\subset\omega_{2}\subset\subset\omega$,
where the critical points of our chosen Carleman weight are in
$\omega_0$ as above.

Let $\chi\in C^{\infty}(\overline{\Omega})$ be such that
$0\leq\chi\leq 1$ and
\begin{equation*}
    \chi \equiv
    \begin{cases}
        0 &\text{near $\overline{\omega_1}$}\\
        1 &\text{near $\complement\omega_2$}.
    \end{cases}
\end{equation*}

Let
\begin{equation*}
    M_{1}:=\max_{\overline{\Omega}\backslash\omega_{1}}\varphi,\qquad
    M_{2}:=\max_{\overline{\omega_{2}}\backslash\omega_{1}}\varphi,\qquad\text{and}\qquad
    m:=\min_{\overline{\Omega}\backslash\omega_{1}}\varphi
\end{equation*}
and note that $M_{2}>m$ when $\varphi$ is our chosen Carleman
weight.

Using our chosen weight $\varphi$ we apply the boundary Carleman
estimate (Theorem~\ref{P:CarlemanEst2}) to
$f=e^{\frac{\varphi}{h}}\chi u$ on
$\overline{\Omega}\backslash\omega_0$, with $\Gamma =
\partial\Omega$ and where we use the fact that $\chi$ vanishes near $\partial\omega_{0}$ (see Remark~\ref{Re:BurqCO}):
\begin{equation*}\label{E:CarlemanApp34246gg}
    \begin{aligned}
    ch^{\frac{1}{2}}||e^{\frac{\varphi}{h}}\chi u||_{L^{2}(\Omega)}
    &\leq ||e^{\frac{\varphi}{h}}(P-E(h))\chi u||_{L^{2}(\Omega)}\\
    &= ||e^{\frac{\varphi}{h}}[P,\chi]u+e^{\frac{\varphi}{h}}\chi (P-E(h))u||_{L^{2}(\Omega)}\\
    &\lesssim he^{\frac{M_{2}}{h}}||u||_{H^{1}_{h}(\omega_2)}
        +e^{\frac{1}{h}(M_{1}-\beta)}\\
    &\lesssim he^{\frac{M_{2}}{h}}(||u||_{L^{2}(\omega)}+e^{-\frac{\beta}{h}}) + e^{\frac{1}{h}(M_{1}-\beta)}.
    \end{aligned}
\end{equation*}
We have used Lemma~\ref{Cor:Useful} in the last step.

Hence
\begin{equation*}
    \begin{aligned}
    e^{\frac{m}{h}}||\chi u||_{L^{2}(\Omega)}
    &\lesssim h^{\frac{1}{2}}e^{\frac{M_{2}-\beta}{h}}
    +h^{\frac{1}{2}}e^{\frac{M_{2}}{h}}||u||_{L^{2}(\omega)}
    +h^{-\frac{1}{2}}e^{\frac{1}{h}(M_{1}-\beta)}\\
    &\lesssim e^{\frac{M_{2}-\beta}{h}} + e^{\frac{M_{2}}{h}}||u||_{L^{2}(\omega)}
    +h^{-\frac{1}{2}}e^{\frac{1}{h}(M_{1}-\beta)}
    \end{aligned}
\end{equation*}
which gives, with $\alpha := M_{2}-m>0$,
\begin{equation*}
    \begin{aligned}
    1 &\lesssim ||\chi u||_{L^{2}(\Omega)} + ||u||_{L^{2}(\omega)}\\
    &\lesssim
    e^{\frac{\alpha-\beta}{h}} + e^{\frac{\alpha}{h}}||u||_{L^{2}(\omega)}+||u||_{L^{2}(\omega)}
    +h^{-\frac{1}{2}}e^{\frac{1}{h}(M_{1}+\alpha-M_{2}-\beta)}\\
    &\lesssim e^{\frac{\alpha-\beta}{h}} + e^{\frac{\alpha}{h}}||u||_{L^{2}(\omega)}
    +h^{-\frac{1}{2}}e^{\frac{1}{h}(M_{1}+\alpha-M_{2}-\beta)}.
    \end{aligned}
\end{equation*}
That is,
\begin{equation*}
    e^{-\frac{\alpha}{h}}-e^{-\frac{\beta}{h}}
    -h^{-\frac{1}{2}}e^{\frac{1}{h}(M_{1}-M_{2}-\beta)}
    \lesssim ||u||_{L^{2}(\omega)}.
\end{equation*}
proving the result for quasimodes of accuracy
$\mathcal{O}(e^{-\frac{\beta}{h}})$, with, say,
\begin{equation}\label{E:beta0}
    \beta>\alpha+\max_{\overline{\Omega}\backslash\omega_{1}}\varphi-\max_{\overline{\omega_{2}}\backslash\omega_{1}}\varphi
    =:\beta_{0} \,\,(\geq \alpha).
\end{equation}

\end{proof}

\vspace{10pt}

\begin{remark}\label{Re:QMint}\emph{
Theorem~\ref{T:One} is sharp in $h$, both in terms of the
quasimode accuracy and in terms of the lower bound. For the
former, we consider quasimodes in the case where Agmon estimates
are relevant; we construct these quasimodes by simply multiplying
an eigenfunction by a suitable cutoff function. To be precise, we
let $E\in\Rbb$ and let $V$ be a potential such that the compact
set (the classically allowed region)
\begin{equation*}
    K:=\{x\in\overline{\Omega};\, V(x)\leq E\}
\end{equation*}
is non-empty and is contained in $\Omega$; hence the classically forbidden region
\begin{equation*}
    \{x\in\overline{\Omega};\, V(x)>E\}\neq\emptyset
\end{equation*}
contains a neighborhood of $\partial\Omega$. We then let $\chi\in C^{\infty}_{0}(\Omega)$ be such that $\chi=1$ near
$K$. Then $\text{supp}\nabla\chi\subset\{x\in\Omega;\, V(x)>E\}$. We then consider a family of
Dirichlet eigenfunctions $u(\cdot\,;h)$ such that
\begin{equation*}
    \begin{cases}
    Pu=(E+\lambda(h))u,\\
    ||u||_{L^{2}(\Omega)}=1, &\text{and}\\
    \lambda(h)\rightarrow 0 &\text{as $h\rightarrow 0$.}
    \end{cases}
\end{equation*}
Then
\begin{equation*}
    [-h^{2}\Delta,\chi]u=-h^{2}(\Delta\chi)u-2h^{2}\nabla\chi\cdot\nabla u
\end{equation*}
and hence
\begin{equation*}
    \begin{aligned}
    ||(P-E-\lambda(h))(\chi u)||_{L^{2}(\Omega)}
    &\leq h^{2}||(\Delta\chi)u||_{L^{2}(\Omega)}+2h^{2}||\nabla\chi\cdot\nabla
    u||_{L^{2}(\Omega)}\\
    &\leq Ch^{2}(||u||_{L^{2}(\text{supp}\nabla\chi)}+||\nabla
    u||_{L^{2}(\text{supp}\nabla\chi)})\\
    &\leq Ch^{2}(e^{-\frac{\epsilon}{h}}+||\nabla u||_{L^{2}(\text{supp}\nabla\chi)})
    \end{aligned}
\end{equation*}
for some $\epsilon>0$, as given by Agmon estimates (see, for
example, the book of Dimassi and Sj\"{o}strand \cite{R:DimassiSj}
or that of Helffer \cite{R:Helff}). We now let $U$ be an open set
containing $\text{supp}\nabla\chi$ and such that $\overline{U}$ is
contained in the interior of the classically forbidden region.
Then Lemma~\ref{Cor:Useful}, combined with another Agmon estimate,
gives (possibly with a different $\epsilon>0$)
\begin{equation}\label{E:AgQM}
    \begin{aligned}
    ||(P-E-\lambda(h))(\chi u)||_{L^{2}(\Omega)}
    &\lesssim
    h^{2}e^{-\frac{\epsilon}{h}}+h||u||_{L^{2}(U)}\\
    &\lesssim h^{2}e^{-\frac{\epsilon}{h}}+he^{-\frac{\epsilon}{h}}
    \end{aligned}
\end{equation}
for all $h>0$ sufficiently small.
Moreover, these same Agmon estimates show that
\begin{equation*}
    ||\chi u||_{L^{2}(\Omega)}=1-\mathcal{O}(e^{-\frac{\delta}{h}})
\end{equation*}
for some $\delta>0$ and for all $h>0$ sufficiently small. Hence
$\chi u$ can be renormalized without affecting the estimate
(\ref{E:AgQM}), thus resulting in a normalized quasimode which
vanishes in an open set.}

\emph{In summary, for $e^{-\frac{\beta}{h}}$ quasimodes, with
$\beta$ sufficiently large, we have our lower bound. But there are
\emph{other} $e^{-\frac{\epsilon}{h}}$ quasimodes (with
$\epsilon>0$ related to the Agmon metric) which vanish identically
in an $h$-independent open subset whose closure is contained in
the interior of the classically forbidden region.}

\emph{Moreover, from this discussion of Agmon estimates, it is
clear that the lower bound in Theorem~\ref{T:One} is sharp in
$h$.}
\end{remark}

\vspace{10pt}

\begin{remark}\emph{
    It may be possible to extend the proof to smooth, compact, connected, and oriented Riemannian
    manifolds, with or without boundary. For example, if $M$ is such a
    manifold without boundary, we let $\omega_{0}\subset M$ be open. As before, let $\psi\in
    C^{\infty}(M)$ be a nonnegative Morse function such that $\nabla\psi\neq 0$ on
    $M\backslash\omega_{0}$. Then $\varphi:=e^{\gamma\psi}$, with
    $\gamma>>1$, is a Carleman weight on $M\backslash\omega_{0}$, so
    we can apply the interior Carleman estimate on
    $M\backslash\omega_0$ (see Remark~\ref{Re:BurqCO}).}

    \emph{We again have that the result is sharp in terms of $h$,
    as the following concrete example shows.
    On the sphere $S^2$, with usual spherical coordinates
    $$(x_{1},x_{2},x_{3})=(\sin\theta\cos\varphi,\sin\theta\sin\varphi,\cos\theta),$$ we
    consider the functions
    \begin{equation*}
        f_n(\theta,\varphi) = (\sin\theta)^{n}(\cos\varphi+
        i\sin\varphi)^n.
    \end{equation*}
    These are called ``zonal harmonics''.}

    \emph{Then, letting $\Delta$ denote the spherical Laplacian,
    \begin{equation*}
        \Delta = \frac{\partial^2}{\partial\theta^2}+
        \frac{\cos\theta}{\sin\theta}\frac{\partial}{\partial\theta}+
        \frac{1}{\sin^2\theta}\frac{\partial^2}{\partial\varphi^2},
    \end{equation*}
    we get
    \begin{equation*}
        -\Delta f_n = n(n+1)f_n.
    \end{equation*}}

    \emph{We must now study the norm of $f_n$:
    \begin{equation*}
        \begin{aligned}
        \int_{S^2}|f_n|^2
        &=4\pi\int_{0}^{1}(1-x^2)^n dx = 2\pi\int_{0}^{1}(1-t)^n
        t^{-\frac{1}{2}}dt\\
        &= 2\pi B(\frac{1}{2},n+1)\\
        &= 4^{n+1}\pi\frac{(n!)^2}{(2n+1)!}\\
        &\approx \frac{4\pi^{\frac{3}{2}}n^{\frac{1}{2}}}{2n+1},
        \end{aligned}
    \end{equation*}
    where
    \begin{equation*}
        B(\frac{1}{2},n+1)= 2\Big{(}\frac{2\times 4\times 6 ...(2n)}{1\times 3\times 5
        ...(2n+1)}\Big{)}
    \end{equation*}
    is a so-called beta function. The important
    point is that we get some power of $n$, which is inconsequential
    against an exponential factor.}

    \emph{
    Now for local estimates, we have
    \begin{equation*}
        \int_{\omega}|f_n|^2 dS =
        \iint_{\omega}|\sin\theta|^{2n+1} d\theta d\varphi.
    \end{equation*}
    If we are looking at a set $\omega$ where, say, $(0\leq)\sin\theta
    \leq e^{-1}$, we get
    \begin{equation*}
        \int_{\omega}|f_n|^2 dS \lesssim e^{-2n}.
    \end{equation*}}

    \emph{With $h^{-2}:=n(n+1)$ and letting $F_h$ denote the corresponding
    \emph{normalized} eigenfunction, we get that
    \begin{equation*}
        ||F_n||_{L^{2}(\omega)}\lesssim e^{-\frac{\alpha}{h}}
    \end{equation*}
    for some $\alpha>0$ for all $h>0$ small enough.}
\end{remark}

\vspace{10pt}

\section{A Lower Bound for Normal Derivatives}\label{S:BdryTrace}

We now turn to the proof of Theorem~\ref{T:Two}, where the main
ideas came from a careful reading of a paper of Burq
\cite{R:BurqActa}. Thus, following Burq, we will use ``compatible
Morse functions'', as constructed in the following proposition,
whose proof can be found in \cite{R:BurqActa}, Appendix A. We are
allowing $\Gamma=\partial\Omega$, in which case some of the
conditions are void. In any case, we take $\Gamma$ to be a
connected component of $\partial\Omega$.

\begin{proposition}
    There exist Morse functions $\psi_{1},\psi_{2}$ on a
    neighborhood of $\overline{\Omega}$ such that
    \begin{equation*}
        N\psi_{i}\Big{|}_{\obdry\backslash\Gamma}<0 \qquad
        \text{and} \qquad N\psi_{i}\Big{|}_{\Gamma}>0,\qquad i=1,2,
    \end{equation*}
    and such that, for $x\in\Omega$, we have
    \begin{equation}\label{E:compat}
        \{\nabla\psi_{i}(x)=0\}
        \Longrightarrow \{ \nabla\psi_{i+1}(x)\neq 0 \quad\text{and}\quad \psi_{i+1}(x)>\psi_{i}(x)
        \} \qquad (\psi_3 \equiv \psi_1).
    \end{equation}
\end{proposition}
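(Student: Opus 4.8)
The plan is to build the two functions separately but with an eye to the compatibility relation \eqref{E:compat}. First I would fix, as in Section~\ref{S:Useful}, a smooth function with strictly negative normal derivative on $\obdry\backslash\Gamma$ and strictly positive normal derivative on $\Gamma$; such a function exists because $\Gamma$ is a \emph{union} of connected components, so the sign of the normal derivative can be prescribed component-by-component using a partition of unity subordinate to the components of $\obdry$, then extended smoothly to a neighborhood of $\overline{\Omega}$. Perturbing in the $C^{2}$ (indeed $C^{\infty}$) topology by the Morse-function density theorem preserves the open conditions on $N\psi|_{\obdry}$, so we obtain a Morse function $\psi_{1}$ with the required boundary behavior. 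Since $\psi_{1}$ is Morse, its critical points in $\overline{\Omega}$ are finite in number and, by the boundary sign conditions, all lie in the open set $\Omega$.

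Next I would construct $\psi_{2}$. Start from \emph{any} Morse function $\tilde\psi_{2}$ with the same boundary sign conditions (same construction as above). The two conditions in \eqref{E:compat} at the finitely many critical points $x_{1},\dots,x_{k}$ of $\psi_{1}$ are: $\nabla\psi_{2}(x_{j})\neq 0$ and $\psi_{2}(x_{j})>\psi_{1}(x_{j})$. The second is easy: add a large constant to $\tilde\psi_{2}$ so that $\tilde\psi_{2}>\max_{j}\psi_{1}(x_{j})$ everywhere on $\overline{\Omega}$; this does not touch derivatives. For the first, if some $\nabla\tilde\psi_{2}(x_{j})=0$ we must move the critical points of $\tilde\psi_{2}$ away from $\{x_{1},\dots,x_{k}\}$. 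Here I would use exactly the flow trick from the Lemma proved just above in the excerpt: the critical points of $\tilde\psi_{2}$ are finitely many, say $y_{1},\dots,y_{m}$, and by a compactly supported diffeomorphism $\varkappa$ of $\overline{\Omega}$, equal to the identity near $\obdry$, we can push each $y_{\ell}$ into a small ball avoiding the compact set $\{x_{1},\dots,x_{k}\}$ (generic position: an arc from $y_{\ell}$ to a nearby generic point misses the finite set $\{x_{j}\}$, and the arcs can be taken pairwise disjoint and supported in the interior). Set $\psi_{2}:=\tilde\psi_{2}\circ\varkappa^{-1}$; its critical points are $\varkappa(y_{\ell})$, none equal to any $x_{j}$, so $\nabla\psi_{2}(x_{j})\neq 0$ for all $j$, while $N\psi_{2}|_{\obdry}$ is unchanged since $\varkappa=\mathrm{id}$ near $\obdry$, and $\psi_{2}>\psi_{1}(x_{j})$ for all $j$ is still in force (diffeomorphism does not change the range of values).

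Finally, with $\psi_{3}\equiv\psi_{1}$, the remaining half of \eqref{E:compat} — namely $\nabla\psi_{1}(x)=0 \Rightarrow \nabla\psi_{2}(x)\neq 0$ and $\psi_{2}(x)>\psi_{1}(x)$ when $i=2$, i.e.\ $\nabla\psi_{2}(x)=0 \Rightarrow \nabla\psi_{1}(x)\neq 0$ and $\psi_{1}(x)>\psi_{2}(x)$ — is not yet symmetric, so one pass is not enough. The clean way is to iterate: first choose $\psi_{1}$ freely, then choose $\psi_{2}$ as above so that the $i=1$ case of \eqref{E:compat} holds, then \emph{re-choose} $\psi_{1}$ by the same flow-plus-constant procedure applied with the roles reversed, pushing the critical points of $\psi_{1}$ off the (now fixed) critical set of $\psi_{2}$ and raising $\psi_{1}$ above $\psi_{2}$ there; since moving the critical points of $\psi_{1}$ to generic interior positions keeps them off the critical set of $\psi_{2}$ but also keeps them off where we need the $i=2$ inequality, one checks both implications of \eqref{E:compat} now hold. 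The main obstacle is precisely this circularity — the compatibility condition couples the two functions in both directions — and it is resolved exactly as in Burq's Appendix~A by noting that the perturbations are by diffeomorphisms supported away from $\obdry$ together with additive constants, so each adjustment is ``transparent'' to the constraints already achieved: the finiteness of all critical sets and the genericity of arc-pushing make the two conditions simultaneously attainable after at most one extra round. I would refer to \cite{R:BurqActa}, Appendix~A, for the details of this bookkeeping.
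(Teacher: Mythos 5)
The paper itself does not prove this proposition; it quotes it from Burq's Appendix~A in \cite{R:BurqActa}, so your construction has to be judged on its own terms. The first pass is fine: prescribing the sign of $N\psi$ component-by-component on $\obdry$, the density of Morse functions in the $C^{1}$ (or $C^{\infty}$) topology, and the interior flow lemma making the two critical sets disjoint are all sound, and they give $\nabla\psi_{2}\neq 0$ on $\mathrm{crit}(\psi_{1})$ and vice versa. The genuine gap is in the two-sided value condition, which is the real content of (\ref{E:compat}): you need $\psi_{2}>\psi_{1}$ on $\mathrm{crit}(\psi_{1})$ \emph{and} $\psi_{1}>\psi_{2}$ on $\mathrm{crit}(\psi_{2})$. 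Your mechanism for the first is a large global constant making $\psi_{2}>\max_{j}\psi_{1}(x_{j})$ (in fact, as chosen, possibly $\psi_{2}>\sup_{\overline{\Omega}}\psi_{1}$) everywhere; but then the second inequality must be recovered by modifying $\psi_{1}$, and the tools you allow in the ``extra round'' cannot do this in general. Composing $\psi_{1}$ with an interior diffeomorphism preserves its range, so if the constant was large enough that $\psi_{2}>\sup_{\overline{\Omega}}\psi_{1}$, no such composition can produce $\psi_{1}>\psi_{2}$ at the critical points of $\psi_{2}$; and adding a constant to $\psi_{1}$ raises it at its own critical points as well, directly threatening the already-achieved inequality $\psi_{2}>\psi_{1}$ there. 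So the claim that each adjustment is ``transparent'' to the constraints already in force is true for the gradient/disjointness and boundary-sign constraints but false for the value constraints, and the assertion that ``at most one extra round'' closes the loop is exactly the point that is never checked --- it is the whole difficulty of the proposition, and your intermediate normalization (the large global constant) actively obstructs the $i=2$ half of (\ref{E:compat}) rather than being a step toward it.

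What is needed is a mechanism that adjusts values with \emph{opposite} signs near the two disjoint finite critical sets, not a single global shift. For instance, one can use the flow lemma to place $\mathrm{crit}(\psi_{1})$ in a small ball $B_{1}$ and $\mathrm{crit}(\psi_{2})$ in a disjoint small ball $B_{2}$, and then look for an additive constant $C$ for $\psi_{2}$ in the window $\max_{B_{1}}(\psi_{1}-\psi_{2})<C<\min_{B_{2}}(\psi_{1}-\psi_{2})$; this forces one to first choose the diffeomorphisms (equivalently the positions of $B_{1},B_{2}$ relative to $\psi_{1}-\psi_{2}$) so that the window is nonempty, or alternatively to make local modifications supported near the critical balls, where the other function has nonvanishing gradient so that small perturbations create no new critical points but then only move values by a small amount. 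Carrying out this bookkeeping is precisely what Burq's Appendix~A does, and it is legitimate to defer to it as the paper does --- but then your sketch should not present the global-constant normalization as part of the route, since as written it makes the deferred step harder, not easier.
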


\vspace{10pt}

We call (\ref{E:compat}) the ``compatibility condition''.

\vspace{10pt}

\begin{proof} \emph{(of Theorem~\ref{T:Two}.)} Let $\psi_1$ and $\psi_2$ be compatible Morse functions, which we may assume are
nonnegative. Let $\{x_{ij}\}$ be the (finitely many) critical
points of $\psi_i$ in $\overline{\Omega}$, and let $\epsilon>0$ be small enough so that
\newcounter{spacerule}
\begin{list}{(\roman{spacerule})}
            {\setlength{\leftmargin}{.5in}
             \setlength{\rightmargin}{.5in}
             \usecounter{spacerule}}

    \item the balls $\bte$ are all disjoint ($i$ and $j$ varying) and have closures contained in $\Omega$, and
    \item $\psi_{i+1}>\psi_i$ on $\bte$
    $\qquad(\psi_{3}\equiv\psi_{1})$.
\end{list}

\vspace{10pt}

Let $\chi_i\in C^{\infty}(\overline{\Omega})$, for $i=1,2$, be
such that $0\leq \chi_{i}\leq 1$ and such that
\begin{equation*}\label{E:cutoff0sdf}
    \chi_i =
    \begin{cases}
        0   &\text{near} \quad \overline{\bigcup_{j}\boe}\\
        1   &\text{near} \quad \bigcap_{j}\complement \bte\cap\overline{\Omega}.
    \end{cases}
\end{equation*}
Also, let
\begin{equation*}\label{E:Omegai}
    \Omega_{i}:=\overline{\Omega}\cap\bigcap_{j}\complement \boe,
\end{equation*}
so that $\nabla\psi_{i}\neq 0$ on $\Omega_{i}$.

We now let $\varphi_{i}:=e^{\gamma\psi_{i}}$, with $\gamma>0$
taken large enough so that $\varphi_i$ is a Carleman weight on
$\Omega_i$, which follows from Proposition~\ref{P:CWgamma}. Our
boundary Carleman estimate, Theorem~\ref{P:CarlemanEst2}, applied
to $f=\exp\left(\frac{\varphi_{i}}{h}\right)\chi_{i}u$ on
$\Omega_{i}$ then gives the upper bound
\begin{equation*}
    \begin{aligned}
    ch\int_{\Omega_{i}}\Big{\{}e^{\frac{2\varphi_{i}}{h}}|\chi_i u|^2
    &+e^{\frac{2\varphi_{i}}{h}}|\varphi_{i}^{\prime}\chi_{i}u+h\nabla(\chi_i u)|^2\Big{\}} \\
    &\leq \int_{\Omega_i}e^{\frac{2\varphi_{i}}{h}}|(P-E(h))(\chi_i
    u)|^2\\
    &\qquad\qquad\qquad + h\int_{\cup_{j}\partial\boe\cup\Gamma}\Big{\{}|e^{\frac{\varphi_{i}}{h}}\chi_i u|^2 + |hN(e^{\frac{\varphi_{i}}{h}}\chi_i
    u)|^2\Big{\}}\\
    &=\int_{\Omega_i}e^{\frac{2\varphi_{i}}{h}}|(P-E(h))(\chi_i
    u)|^2\\
    &\qquad\qquad\qquad + h\int_{\Gamma}e^{\frac{2\varphi_{i}}{h}}|hN(\chi_i u)|^2.
    \end{aligned}
\end{equation*}
Together with an elementary lower bound, this gives the estimate
\begin{equation*}
    \begin{aligned}
    h\int_{\Omega_{i}}\Big{\{}|\chi_i u|^2 &+ |h\nabla(\chi_i
    u)|^2\Big{\}}\eit\\
    &\lesssim \int_{\annu}|[P,\chi_i]u|^2 \eit +
    h\int_{\Gamma}|hNu|^2 \eit +e^{\frac{2(M_{i}-\beta)}{h}}
    \end{aligned}
\end{equation*}
where $M_{i}:=\max_{\Omega_{i}}\varphi_{i}$ and $\annu:=\cup_{j}(\bte\backslash\boe)$.

This implies that
\begin{equation*}
    \begin{aligned}
    \int_{\complement(\cup_{j}\bte)\cap\Omega}&\Big{\{}|u|^2 + |h\nabla
    u|^2\Big{\}}\eit\\
    &\lesssim h\int_{\annu}\Big{\{}|u|^2 + |h\nabla
    u|^2\Big{\}}\eit + \int_{\Gamma}|hNu|^2 \eit +
    h^{-1}e^{\frac{2(M_{i}-\beta)}{h}}.
    \end{aligned}
\end{equation*}

\vspace{10pt}

Adding the two estimates, for $i=1,2$, we get
\begin{equation*}
    \begin{aligned}
    \sum_{i=1}^{2}&\int_{\complement(\cup_{j}\bte)\cap\Omega}\Big{\{}|u|^2 + |h\nabla
    u|^2\Big{\}}\eit\\
    &\lesssim \sum_{i=1}^{2}\Big{[}h\int_{\annu}\Big{\{}|u|^2 + |h\nabla
    u|^2\Big{\}}\eit + \int_{\Gamma}|hNu|^2 \eit + h^{-1}e^{\frac{2(M_{i}-\beta)}{h}} \Big{]}\\
    &\lesssim \sum_{i=1}^{2}\Big{[}h\int_{\annu}\Big{\{}|u|^2 + |h\nabla
    u|^2\Big{\}}e^{\frac{2\varphi_{i+1}}{h}} + \int_{\Gamma}|hNu|^2
    \eit + h^{-1}e^{\frac{2(M_{i}-\beta)}{h}} \Big{]}
    \end{aligned}
\end{equation*}
with $\varphi_3\equiv\varphi_1$, where we have used that
$\psi_{i+1}>\psi_i$ on $\bte$, with $\psi_{3}\equiv\psi_{1}$ (see
(ii) above).

But $\mathcal{A}_1 \subset
\complement(\cup_{j}B(x_{2j},2\epsilon))\cap\Omega$ and
$\mathcal{A}_2 \subset
\complement(\cup_{j}B(x_{1j},2\epsilon))\cap\Omega$, so we can
absorb the ``$\mathcal{A}$'' terms. This gives
\begin{equation*}
    \begin{aligned}
    \int_{\complement(\cup_{j}B(x_{1j},2\epsilon))\cap\Omega}&\Big{\{}|u|^2 + |h\nabla
    u|^2\Big{\}}e^{\frac{2\varphi_1}{h}} +
    \int_{\complement(\cup_{j}B(x_{2j},2\epsilon))\cap\Omega}\Big{\{}|u|^2 + |h\nabla
    u|^2\Big{\}}e^{\frac{2\varphi_2}{h}}\\
    &\lesssim \int_{\Gamma}|hNu|^2 e^{\frac{2\varphi_1}{h}}
    + \int_{\Gamma}|hNu|^2
    e^{\frac{2\varphi_2}{h}}+h^{-1}e^{\frac{2(M_{1}-\beta)}{h}}+h^{-1}e^{\frac{2(M_{2}-\beta)}{h}}.
    \end{aligned}
\end{equation*}

\vspace{10pt}

We let
\begin{equation*}\label{E:M}
    \begin{aligned}
    M&:=\max(\max_{\Gamma}\varphi_1,\max_{\Gamma}\varphi_2),\\
    m&:=\min(\min_{\overline{\Omega}}\varphi_1,\min_{\overline{\Omega}}\varphi_2),\qquad\text{and}\\
    \tilde{M}&:=\max(M_{1},M_{2}),
    \end{aligned}
\end{equation*}
and we note that $M>m$, by the positivity of the (outward) normal
derivatives of the Carleman weights on $\Gamma$:
\begin{equation*}
    N\varphi_{i}\Big{|}_{\Gamma}>0.
\end{equation*}
We then have
\begin{equation*}
    e^{\frac{2m}{h}}\int_{\Omega}\Big{\{}|u|^2 + |h\nabla
    u|^2\Big{\}}\lesssim e^{\frac{2M}{h}}\int_{\Gamma}|hNu
    |^2+h^{-1}e^{\frac{2(\tilde{M}-\beta)}{h}}.
\end{equation*}
We may now simply omit the gradient term on the left side and take
$\beta$ such that $\beta>\tilde{M}-m=:\beta_{0}$. Hence there
exist $c_0>0$ and $h_0>0$ such that
\begin{equation*}
    c_0 e^{-\frac{(M-m)}{h}}\leq ||hNu||_{L^{2}(\Gamma)}
    \qquad \forall h\in(0,h_0)
\end{equation*}
hence proving the theorem.
\end{proof}

\vspace{10pt}

\begin{remark}\label{Re:bdryopt} \emph{
    Just as in the previous section, we may use Agmon estimates to
    show that the $h$-dependence in Theorem~\ref{T:Two} is sharp, both for the stated quasimode accuracy
    and for the lower bound. As in Remark~\ref{Re:QMint}, we consider the case when the classically allowed region
    \begin{equation*}
        \{x\in\overline{\Omega};\, V(x)\leq E\}
    \end{equation*}
    is a non-empty subset of the open set $\Omega$. Then a neighborhood of the boundary of $\Omega$ is contained in the classically forbidden region
    \begin{equation*}
        \{x\in\overline{\Omega};\, V(x)>E\}.
    \end{equation*}
    Then, precisely as in Remark~\ref{Re:QMint}, we can use a cutoff function to create
    exponentially precise quasimodes which vanish in an $h$-independent neighborhood of $\Gamma$.}

    \emph{To show that the lower bound in Theorem~\ref{T:Two}
    is sharp in terms of $h$, we recall a well-known argument for estimating normal derivatives
    of eigenfunctions; we learned this from papers of Burq \cite{R:BurqControl}
    and Hassell and Tao \cite{R:HassTao}, where the relevant estimates are called ``Rellich-type estimates''.
    For simplicity, we take $G$ to be the identity matrix.}

    \emph{
    \begin{lemma}
        Let $u(\cdot\,;h)$ be a Dirichlet eigenfunction of $P$.
        Then, for any differential operator $A$,
        \begin{equation}\label{E:RellichId}
        \int_{\Omega}u[P,A]u = h^{2}\int_{\partial\Omega}\frac{\partial u}{\partial
        n}Au.
        \end{equation}
    \end{lemma}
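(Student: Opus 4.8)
The plan is to prove the Rellich-type identity (\ref{E:RellichId}) by a direct integration by parts, exploiting the Dirichlet boundary condition. First I would write $P = -h^2\Delta + V$ and compute $\int_\Omega u[P,A]u = \int_\Omega u(PAu) - \int_\Omega u(APu)$. Since $u$ is an eigenfunction with $Pu = E(h)u$, the second term becomes $E(h)\int_\Omega u(Au)$; and since $P - E(h)$ annihilates $u$, I can just as well write $\int_\Omega u[P,A]u = \int_\Omega u(P-E(h))(Au)$, which is the more convenient form. So the identity to establish is $\int_\Omega u(P-E(h))(Au) = h^2\int_{\partial\Omega}\frac{\partial u}{\partial n}Au$.

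Next I would integrate by parts twice in the term $-h^2\int_\Omega u\,\Delta(Au)$ (with $G$ the identity, so $\Delta$ is the ordinary Laplacian). The first integration by parts gives $h^2\int_\Omega \nabla u\cdot\nabla(Au) - h^2\int_{\partial\Omega} u\,\frac{\partial(Au)}{\partial n}$, and the boundary term vanishes because $u|_{\partial\Omega}=0$. A second integration by parts turns $h^2\int_\Omega\nabla u\cdot\nabla(Au)$ into $-h^2\int_\Omega(\Delta u)(Au) + h^2\int_{\partial\Omega}\frac{\partial u}{\partial n}(Au)$. The potential term contributes $\int_\Omega (V-E(h))u\,(Au)$, which combines with $-h^2\int_\Omega(\Delta u)(Au)$ to give $\int_\Omega\big((P-E(h))u\big)(Au) = 0$, again using $Pu=E(h)u$. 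What survives is precisely the boundary term $h^2\int_{\partial\Omega}\frac{\partial u}{\partial n}(Au)$, which is the right-hand side of (\ref{E:RellichId}).

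The only point requiring a little care is the regularity needed to justify these integrations by parts: $u \in H^2(\Omega)\cap H^1_0(\Omega)$ has a well-defined normal-derivative trace in $H^{1/2}(\partial\Omega)$, but $Au$ involves derivatives of $u$ up to the order of $A$, so for $A$ of order $k$ one needs $u \in H^{k+2}$ for the boundary and volume integrals to make classical sense. Since eigenfunctions of the (elliptic) Dirichlet realization are smooth up to the boundary by elliptic regularity, this causes no trouble; I would note that $u\in C^\infty(\overline\Omega)$ and proceed with the computation as above. This is the main (mild) obstacle — everything else is a routine symmetric integration by parts — and in the application $A$ will be a specific first- or second-order operator, so the regularity issue is entirely harmless.
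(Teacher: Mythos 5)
Your argument is correct and is essentially the paper's proof: the paper writes $\int_\Omega u[P,A]u=\int_\Omega\big[u(P-E(h))Au-Au(P-E(h))u\big]=h^2\int_\Omega\big[Au\,\Delta u-u\,\Delta Au\big]$ and then invokes Green's formula together with $u|_{\partial\Omega}=0$, which is exactly your two integrations by parts. Your added remark on elliptic regularity of Dirichlet eigenfunctions ($u\in C^\infty(\overline\Omega)$) is a harmless justification that the paper leaves implicit.
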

    \begin{proof}
        Let $E(h)$ be the eigenvalue corresponding to
        $u(\cdot\,;h)$. Then
        \begin{equation*}
            \begin{aligned}
            \int_{\Omega}u[P,A]u
            &=\int_{\Omega}\Big[ u(P-E(h))Au - Au(P-E(h))u \Big]\\
            &=h^{2}\int_{\Omega}\Big[ Au\Delta u-u\Delta Au \Big],
            \end{aligned}
        \end{equation*}
        and so, by Green's formula and the fact that $u$ vanishes
        on the boundary, we get the desired identity.
    \end{proof}
    }

    \emph{
    We now choose an operator $A$ so that
    $||\partial_{n}u||^{2}_{L^{2}(\partial\Omega)}$ is recoverable from
    (\ref{E:RellichId}). For this, we use so-called geodesic
    normal coordinates near $\partial\Omega$, that is, coordinates
    $(r,y)$ near $\partial\Omega$ such that $r$ is the distance to
    $\partial\Omega$. Then we choose $$A=\chi(r)\frac{\partial}{\partial
    r},$$ where $\chi\in C^{\infty}_{0}(\Rbb)$ and is such that, for some
    $\delta>0$,
    \begin{equation*}
        \chi=
        \begin{cases}
        1 &\text{for }0\leq r\leq \frac{\delta}{2}\\
        0 &\text{for }r\geq\delta.
        \end{cases}
    \end{equation*}
    We take $\delta>0$ so that the coordinates $(r,y)$ are smooth
    for $r\in[0,\delta]$. Then the right side of (\ref{E:RellichId}) is just
    \begin{equation*}
        \int_{\partial\Omega}\left|h\frac{\partial u}{\partial n}\right|^{2}.
    \end{equation*}
    As for the left side of (\ref{E:RellichId}), we simply consider
    \begin{equation*}
        \int_{\Omega}u[P,A]u=h^{2}\int_{\Omega}u[-\Delta,A]u+\int_{\Omega}u[V,A]u.
    \end{equation*}
    Letting $$N_{\delta}(\partial\Omega):=\{x\in\Omega;\, \text{dist}(x,\partial\Omega)\leq\delta\}$$ it is clear that $[V,A]$ is a smooth function, supported in $N_{\delta}(\partial\Omega)$, and that $[-\Delta,A]$ is a second-order differential operator with smooth coefficients supported in $N_{\delta}(\partial\Omega)$. Hence
    \begin{equation*}
        \begin{aligned}
        \int_{\partial\Omega}\left|h\frac{\partial u}{\partial n}\right|^{2}
        &=\left|\int_{\Omega}u[P,A]u\right|\\
        &\lesssim \int_{N_{\delta}(\partial\Omega)}\left(|u|^{2}+|h\nabla u|^{2}\right).
        \end{aligned}
    \end{equation*}
    }

    \emph{
    If $\delta>0$ is moreover small enough so that
    $N_{\delta}(\partial\Omega)$ is in the interior of the
    classically forbidden region, Agmon estimates, as in Remark~\ref{Re:QMint}, show that
    \begin{equation*}
        \int_{N_{\delta}(\partial\Omega)}\left(|u|^{2}+|h\nabla u|^{2}\right)
        \lesssim e^{-\frac{c}{h}}
    \end{equation*}
    for some $c>0$ and for all $h>0$ sufficiently small. So we
    finally arrive at the estimate
    \begin{equation*}
        \int_{\partial\Omega}\left| h\frac{\partial u}{\partial n}
        \right|^{2}\lesssim e^{-\frac{c}{h}}.
    \end{equation*}
    }
\end{remark}

\vspace{10pt}

\section*{Acknowledgement}

It is a pleasure to thank M. Hitrik for the many helpful discussions.

\vspace{10pt}

\end{document}